\documentclass[journal,twoside,web]{ieeecolor}
\usepackage{generic}
\usepackage{cite}
\usepackage{amsmath,amssymb,amsfonts}
\usepackage{graphicx}
\usepackage{textcomp}
\usepackage{comment}
\usepackage{algorithm}
\usepackage{algpseudocode}
\usepackage{soul}
\usepackage{setspace}

\newcommand{\fc}[1]{\textcolor{blue}{#1}}

\usepackage{preamble_papers}

\def\BibTeX{{\rm B\kern-.05em{\sc i\kern-.025em b}\kern-.08em
    T\kern-.1667em\lower.7ex\hbox{E}\kern-.125emX}}
\begin{document}
\title{Learning-Based Stochastic Optimal Control with Infinite-Horizon Probabilistic Constraints}
\author{Francesco Cordiano, Kanghui He, and Bart De Schutter, \IEEEmembership{Fellow, IEEE}
    \thanks{This project has received funding from the European Research Council (ERC) under the European Union's Horizon 2020 research and innovation programme (Grant agreement No.\ 101018826 - ERC Advanced Grant CLariNet), and by the Rubicon Postdoctoral Fellowship (Correspondence No. 2026/ENW/02250137), funded by the Netherlands Organisation for Scientific Research (NWO).}
    \thanks{Francesco Cordiano and Bart De Schutter are affiliated with the Delft Center for Systems and Control, Delft University of Technology, 2628 CD, Delft, The Netherlands, email: $\{$f.cordiano, b.deschutter$\}$@tudelft.nl. Kanghui He is affiliated with the Department of Engineering Science, University of Oxford, OX1 3PJ Oxford, U.K.,        
    email: kanghui.he@eng.ox.ac.uk.}
}


\maketitle

\begin{abstract}
In this paper, we consider stochastic optimal control problems with infinite-horizon joint chance constraints. By means of an appropriate state augmentation, we reformulate the original problem as a constrained Markov decision process, in which both the cost and the constraint function exhibit an additive structure. We then prove that this formulation enjoys strong duality, thereby enabling us to reformulate the problem as an equivalent unconstrained one in the Lagrange dual framework. We propose a dual-ascent algorithm to solve the resulting problem and show that it converges to a deterministic Markov policy defined over the augmented state space that is both optimal and feasible. To accommodate continuous state-input spaces, we propose a dedicated learning algorithm to approximate the value function in an offline training setting, thereby significantly reducing the computational complexity of the online control phase. We then test our approach on a numerical example and demonstrate its effectiveness compared to online predictive control methods in terms of performance and computational complexity.

\end{abstract}

\begin{IEEEkeywords}
Stochastic optimal control, joint chance constraints, dynamic programming, neural networks
\end{IEEEkeywords}

\section{Introduction}
\label{sec:introduction}
Decision-making problems are central to many domains, including energy, transportation, robotics, and finance \cite{mesbah2016stochastic, pippia2021scenariobased, dariano2019integrated, bemporad2010scenariobased}. In these settings, the control action must account for potential sources of uncertainty, such as external disturbances or modeling errors, to ensure that both performance and safety requirements are satisfied.

Stochastic optimal control \cite{bertsekas1996stochastic} provides a flexible mathematical framework to synthesize optimal policies for multi-stage decision-making problems.
Such problems can be solved either through offline methods, such as dynamic programming (DP) \cite{bertsekas2012dynamic}, or via online finite-horizon approximations, as commonly done in model predictive control (MPC) \cite{paulson2019efficient}. Although both approaches allow constraints to be considered in the problem formulation, their applicability can be significantly limited for complex constraint structures.
For example, in the context of stochastic safety-critical systems, it is common
to require that the controlled system satisfies constraints along
the entire path with a certain (high) probability \cite{ono2008iterative, wang2022solving, schmid2025computing, laurenti2025unifying}, typically referred to as joint chance constraints, or mission-wide chance constraints. This is in contrast to individual (or stagewise) chance constraints, where, given the current state measurement, we require that the constraint is satisfied with a certain probability for the next state only \cite{farina2016stochastic}. Since joint chance constraints involve the entire path, the resulting policy can in general be non-Markovian and may even be stochastic, as observed in \cite{schmid2025computing, ni2025learning}.

Historically, joint chance constraints have been addressed using Boole's inequality \cite{farina2016stochastic}, which allows replacing a joint chance constraint over $N$ steps with $N$ individual chance constraints, by appropriately adjusting the risk parameter. This can introduce conservatism, which can be mitigated by iterative risk allocation approaches \cite{ono2008iterative}. Alternatively, joint chance constraints can be encoded in MPC problems by treating the system trajectory over $N$ steps jointly as a random variable and approximating the resulting chance constraint using suitable methods, e.g., randomized or scenario-based approaches \cite{prandini2012randomized, cordiano2024scenario}. However, although this guarantees open-loop satisfaction of chance constraints, assessing the actual probability of constraint violation for the closed-loop trajectory is difficult due to the inherent receding-horizon nature of MPC. 

Recent papers \cite{wang2022solving, schmid2025computing, ni2025learning} 
have dealt with the inherent non-Markovian nature of joint chance constraints by introducing an appropriate state augmentation. This allows avoiding the usage of 
Boole's inequality, but existing works allow only finite-horizon formulations.
Recent advances in constrained reinforcement learning \cite{paternain2019constrained, paternain2023safe, chen2024probabilistic} propose policy-gradient schemes leveraging known results about strong duality for constrained MDPs \cite{hernandez-lerma2000constrained}, but similarly assume that constraint violations do not occur after a known time step, or employ Boole's inequality for computational tractability.

On the other hand, joint chance constraints in the infinite-horizon setting have received much less attention. For example, a related problem is considered in \cite{abate2007probabilistic}. There, the goal is to maximize the probability that the system remains in a safe set over an infinite horizon, but performance is not encoded in the resulting DP scheme, and the resulting policy is the one that aims at maximizing safety. In \cite{riccardi2026temporal} safety is encoded via abstraction of the safety specification and the system dynamics, and an MPC problem is solved online for performance optimization. However, the resulting approach can become computationally expensive for high-dimensional systems or complex nonlinear dynamics.
Alternatively, stochastic or probabilistic barrier functions allow one to optimize performance while considering probabilistic safety constraints, but they may yield suboptimal and conservative policies, and the probability of safety typically decreases linearly with the prediction horizon \cite{laurenti2025unifying, airaldi2025probabilistically}.

In view of this, the computation of a policy for infinite-horizon joint chance-constrained problems without resorting to conservative approximations is still an open challenge. In this paper, we consider an infinite-horizon stochastic optimal control problem with a discounted cost function and joint chance constraints. We extend \cite{wang2022solving, schmid2025computing, ni2025learning} to an infinite-horizon setting, and in contrast to \cite{ono2008iterative, paternain2023safe, chen2024probabilistic}, we do not resort to conservative approximations based on Boole's inequality, nor do we assume that violations are possible only before a known time step. To allow for general continuous state-action spaces, we propose an algorithm to learn the value function of the given problem using suitable function approximators in an offline step. The novel contributions of our paper are:

\subsubsection{Equivalent Markovian formulation} 
Similar to the finite-horizon case \cite{wang2022solving, schmid2025computing, ni2025learning}, we design a novel state augmentation to cast the infinite-horizon chance-constrained problem as a constrained MDP \cite{hernandez-lerma2000constrained, altman2021constrained}.
In view of this formulation, we discuss necessary and sufficient conditions for the feasibility of the original problem and observe that the usual assumption of bounded uncertainty support can, in principle, be removed in the infinite-horizon setting as well.

\subsubsection{Value computation via duality}
We show that the resulting formulation enjoys strong duality, enabling us to reformulate the problem as an equivalent unconstrained one using the Lagrangian dual framework, where the chance constraint is treated as a penalty term in the objective.
We propose an offline dual-ascent algorithm to compute the value function of the chance-constrained problem. This consists of a primal step, in which we solve a model-based unconstrained problem via dynamic programming, followed by the update of the dual variable. In contrast to previous works, which establish feasibility either via mixed policies \cite{schmid2025computing} or in average over the learning iterations \cite{chen2024probabilistic}, we show that our algorithm converges to a deterministic Markov policy that is both optimal and feasible.

\subsubsection{Learning-based framework}
To tackle general continuous state-action spaces, we propose a dedicated learning scheme based on approximate dynamic programming. Specifically, we extend recent learning-based frameworks \cite{he2024approximate, he2024approximatea} to approximate the Lagrange dual function of the chance-constrained problem in an offline learning phase, which is then used to estimate the value function of the original problem. To ensure high learning quality, we first show that, under mild assumptions, the dual function of the problem of interest enjoys a continuity property, thereby justifying the use of, e.g., neural network approximations, which have universal approximation properties for continuous functions \cite{hornik1989multilayer}.

The paper is organized as follows: Section \ref{sec:problem_formulation} introduces the problem of interest and preliminary mathematical concepts. In Section \ref{sec:tractable}, we reformulate the infinite-horizon chance-constrained problem using Lagrange duality in an appropriate augmented state space. In Section \ref{sec:dual_ascent} we propose a dual-ascent algorithm to solve the resulting problem, and we present related convergence and feasibility guarantees. In Section \ref{sec:learning} we propose a dedicated learning algorithm to approximate the value function of the given problem. Finally, in Section \ref{sec:experiment} we demonstrate the effectiveness of our method on a numerical case study, and Section \ref{sec:conclusions} concludes the article.

\section{Preliminaries and Problem formulation}
\label{sec:problem_formulation}

\subsection{Preliminaries and notation}\label{subsec:preliminaries}
In this paper, $\Z_{\geq a}$ denotes the set of integers greater than or equal to a given constant $a\in\Z$, and $ \R_{\geq a}$ 
denotes the set of real numbers greater than or equal to a given constant $a\in\R$.
The symbol $\mathbf{1}_{X}$ denotes the indicator function of the logical condition $X$, taking the value 1 if $X$ is true, and 0 otherwise. For any $\lambda\in\R$, we denote its projection over $\R_{\geq0}$ as 
$$[\lambda]_{\geq0} = \begin{cases}
    \lambda \ \text{if } \lambda \geq 0
    \\ 0  \ \text{otherwise}.
\end{cases}$$

Let us consider a dynamical system, with dynamics
\begin{align}\label{eq:sys}
    x_{t+1} = f(x_t, u_t, d_t)
\end{align}
where, $\forall t\in\Z_{\geq0}$,  $x_t\in\R^n$ is the state of the system, $u_t\in\R^m$ is the control input, $d_t\in\Delta$ represents exogenous uncertainty described by a certain time-invariant probability distribution with domain $\Delta\subseteq\R^q$, and $f:\R^n\times\R^m\times\R^q\to\R^n$. 
For $t\in\Z_{\geq0}$, we denote the space of histories up to time step $t$ as $\Hcal_t = (\R^n\times\R^m)^{t-1}\times\R^n$, such that $\Hcal_t\ni h_t=(x_0,u_0,...,u_{t-1}, x_t)$, with $x_k\in\R^n, \forall k\in\{0,...,t\}$ and $u_k\in\R^m, \forall k\in\{0,...,t-1\}$.

Let $\Ucal\subseteq\R^m$ be the space of admissible inputs.
We define $\Gamma$ as the space of stochastic policies $\pi$ on $\Ucal$, where $\pi:=\{\mu_t\}_{t=0}^\infty$, and $\mu_t:\Ucal\times \Hcal_t\to[0,1]$ is a Borel-measurable stochastic kernel that assigns a probability measure $\mu_t(\cdot|h_t)$, for a given history $h_t\in\Hcal_t$. Similarly, a deterministic policy is such that $\mu_t(\cdot| h_t) = \delta_{u_t}(\cdot)$, where for a given $u\in\Ucal$, $\delta_u:\Ucal\to\{0,1\}$ is the delta-Dirac function that outputs 1 at $u$ and 0 otherwise. A policy is Markovian if $\mu_t(\cdot|h_t)=\mu_t(\cdot|x_t)$. For simplicity, we refer to $\mu_t$ as the deterministic map from $h_t$ to $u_t$ when $\mu_t$ is a deterministic policy.
Note that, although the previous definitions are introduced for a dynamical system with a continuous state-input space, they can be extended to more general cases, e.g., by including discrete states or actions as well.

Last, for the system \eqref{eq:sys}, we denote by $\P_d^\pi$ and $\E_d^\pi$, respectively, the probability measure and expectation operator induced by the exogenous uncertainty $d_t, t\in\Z_{\geq0}$ and by the stochastic
policy $\pi$.

\subsection{Problem statement}\label{subsec:statement}
We consider the following infinite-horizon chance-constrained optimal control problem, with a given initial state $x_0\in\R^n$:
\begin{align}\label{cc_ocp}
    \begin{split}
        \inf_{\pi\in\Gamma} \ & \E_d^\pi\left[ \sum_{t=0}^\infty \gamma^t \ell(x_t, u_t) \right]
        \\ \text{s.t.} \ &  x_{t+1} =  f(x_t, u_t, d_t), \ \forall t\in\Z_{\geq0}
        \\& \P_d^\pi(x_t\in\Xcal, \ \forall t\in\Z_{\geq0})\geq1-\veps
        \\& u_t\sim\mu_t(\cdot|x_0,u_0,...,x_t), \ \forall t\in\Z_{\geq0}
        \\& \pi=(\mu_0, \mu_1, ...),
    \end{split}
\end{align}
where $\gamma\in(0,1)$
is a discount factor, and $\veps\in[0,1)$ is a risk parameter. In \eqref{cc_ocp}, we require that the infinite-horizon system performance is optimized, while ensuring that the system state belongs to a constraint set $\Xcal\subseteq\R^n$ with probability at least $1-\veps$, jointly over the entire trajectory.
Throughout the paper, we consider the following assumption:
\begin{assumption}\label{ass:cont}
    The functions $f:\R^n\times\R^m\times\R^q\to\R^n$ and $\ell:\R^n\times\R^m\to\R_{\geq0}$ are continuous, and the sets $\Xcal$ and $\Ucal$ are compact. In addition, let $\Xcal_\text{feas}\subseteq\Xcal$ be the set in which \eqref{cc_ocp} is feasible. We assume $\Xcal_\text{feas}$ to be non-empty.
\end{assumption}

The continuity of the dynamics $f$ and of the stage cost $\ell$, as well as the compactness of $\Xcal$ and $\Ucal$, are standard in stochastic optimal control problems and in the MDP literature \cite{altman2021constrained, hernandez-lerma2000constrained}, and it is typically needed to ensure the existence of a minimizing policy. 

Problems like \eqref{cc_ocp} are notoriously difficult: indeed, stochastic constraints generally require non-deterministic policies, and the joint-in-time structure of the chance constraints may induce a non-Markovian structure of the resulting optimal policy. Therefore, classical value-based or policy-based algorithms cannot be directly applied to \eqref{cc_ocp}, which first requires suitable reformulations.


\section{Tractable reformulation}\label{sec:tractable}
As stated before, the challenge in \eqref{cc_ocp} is twofold. On one hand, we need to account for the non-Markovian structure of the problem; on the other hand, the chance constraint in \eqref{cc_ocp} prevents us from applying standard value-based iterative algorithms.
To address the two challenges, we now propose a Markovian problem equivalent to \eqref{cc_ocp} through appropriate state augmentation, and cast it in an equivalent unconstrained formulation in terms of its Lagrange dual.

\subsection{State augmentation}\label{sec:state_augmentation}
As we observed, a feasible policy for \eqref{cc_ocp} is, in general, history-dependent. This is because the joint-in-time chance constraint involves the entire system trajectory rather than the current state realization $x_t$. Furthermore, the presence of a discount factor in the cost function can create a temporal mismatch, since the cost function is discounted but no discount appears in the constraint function, potentially making the optimal policy in \eqref{cc_ocp} time-varying.
In the following, we solve these problems by introducing three auxiliary states. More specifically, we define the augmented state space $\Scal:=\R^n\times\{0,1\}^2\times[0,1]$, with $s:=[x^\top, \xi, \psi, \phi]^\top$. The dynamics of the state $s$ consist of the following updates, defined by $F:\Scal\times\R^m\times\R^q\to\Scal$:
\begin{align}\label{eq:aug_dynamics}
s_{t+1} = F(s_t, u_t, d_t) :=
    \begin{cases}
        x_{t+1} = f(x_t, u_t, d_t)
        \\ \xi_{t+1} = \xi_t \mathbf{1}_{x_{t+1}\in\Xcal}
        \\ \psi_{t+1} = \xi_t - \xi_{t+1}
        \\ \phi_{t+1} = \gamma\phi_t   ,
    \end{cases}
\end{align}
with
$$s_0 = [x_0^\top, \ \mathbf{1}_{x_0\in\Xcal}, \ \mathbf{1}_{x_0\not\in\Xcal}, \ \phi_0]^\top,$$
and $x_0\in\R^n, \phi_0=1$.
As we show now, the first two additional states $\xi$ and $\psi$ are introduced to make the problem Markovian. Conversely, the purpose of $\phi$ is to store the time-dependence due to the discount factor, eliminating the temporal mismatch between cost and constraint function. In fact, by initializing $\phi_0=1$, we have $\phi_t=\gamma^t$.

The state $\xi$ was already defined in \cite{wang2022solving} and \cite{schmid2025computing}, where it is observed that $\xi_t$ takes the value 1 if and only if the entire path $(x_0, ..., x_t)$ satisfies the constraint. Moreover, we have
$$\E_d^\pi[\xi_t] = \E_d^\pi \left[\Pi_{k=0}^t \mathbf{1}_{x_k\in\Xcal}\right] = \P_d^\pi(x_k\in\Xcal, \forall k\in\{0,...,t\}).$$

However, \cite{wang2022solving} and \cite{schmid2025computing} consider the finite-horizon setting, in which case the chance-constraint is simply substituted by $\E_d^\pi[\xi_N]\geq 1-\veps$, where $N\in\Z_{\geq0}$ is the (finite) prediction horizon. However, in our case, we consider the infinite-horizon setting. For this reason, in this paper, we introduce the additional state $\psi_t, t\in\Z_{\geq0}$, which, in view of \eqref{eq:aug_dynamics}, takes the value 1 if and only if $x_t$ is the first state that violates the constraint, and 0 otherwise.  In fact, for all $t\in\Z_{\geq1}$, we have:
\begin{align}
    \E_d^\pi[\psi_t] 
    & = \E_d^\pi\left[\xi_{t-1} - \xi_t \right] \nonumber
    \\& = \E_d^\pi\left[\mathbf{1}_{x_{t}\not\in\Xcal} \xi_{t-1}\right]  \nonumber
    \\& = \E_d^\pi\left[\mathbf{1}_{(x_{t}\not\in\Xcal\land x_{t-1}\in\Xcal\land ... \land x_0\in\Xcal)}\right]  \nonumber
    \\& = \P_d^\pi(x_t\not\in\Xcal \land x_k\in\Xcal,  \forall k\in\{0,...,t-1\}), \label{eq:psi_interpret}
\end{align}
which follows directly from the definitions of the states $\xi$ and $\psi$ in \eqref{eq:aug_dynamics} and from the fact that the expectation of the indicator function of an event is the probability of that event. Note that \eqref{eq:psi_interpret} means that $\E_d^\pi[\psi_t]$ is the probability that $x_t$ is the first state that violates the constraint, for a certain $t\in\Z_{\geq0}$. Since the events in the probability operator in \eqref{eq:psi_interpret} are disjoint for $t\in\Z_{\geq1}$, by initializing $\xi_0=1$ and $\psi_0=0$ we have\footnote{By noticing that $\sum_{t=0}^\infty \E_d^\pi[|\psi_t|]$ is bounded in view of \eqref{eq:aug_dynamics}, expectation and summation can be swapped in view of Fubini's Theorem (see, e.g., \cite{rudin1974real}, Chapter 8).} 
\begin{align}
        \sum_{t=0}^\infty \E_d^\pi[\psi_t] 
        & = \sum_{t=0}^\infty \P_d^\pi(x_t\not\in\Xcal \land x_k\in\Xcal, \forall k\in\{0,...,t-1\}) \nonumber
        \\& = \P_d^\pi(\exists t\in\Z_{\geq0}: x_t\not\in\Xcal) \nonumber
        \\& = 1-\P_d^\pi(x_t\in\Xcal, \forall t\in\Z_{\geq0}). \label{eq:psi_constraint}
\end{align}
Therefore, the constraint
\begin{align}\label{eq:cc_psi}
    \E\left[\sum_{t=0}^\infty \psi_t \right] \leq \veps
\end{align}
is equivalent to the chance constraint in \eqref{cc_ocp}.

We highlight two important aspects of this formulation, which rely on the additional binary states: First, the purpose of the state $\xi_t$ is essentially to memorize the history of the system until time step $t\in\Z_{\geq 1}$; second, the state $\psi_t$ allows to formulate the chance constraint as the additive constraint \eqref{eq:cc_psi}, which, as we will see in the next sections, yields computational advantages.

\subsection{Feasibility of \eqref{cc_ocp}}
The reformulation \eqref{eq:psi_interpret}--\eqref{eq:cc_psi} of the chance constraint unveils necessary and sufficient conditions to ensure the feasibility of \eqref{cc_ocp}. At first sight, one might argue that \eqref{cc_ocp} can be feasible only if the disturbances have a bounded support \cite{laurenti2025unifying}. However, note that this is not strictly necessary. In fact, in view of \eqref{eq:psi_constraint}, we see that the infinite-horizon probability of constraint violation is smaller than $\varepsilon$, with $\veps\in(0,1)$ if and only if 
$\E\left[\sum_{t=0}^\infty \psi_t \right]\leq \veps$,
which is the case only if there exists a policy $\pi\in\Gamma$ such that
\begin{align}\label{eq:feasibility_condition}
    \P_d^\pi(x_t\not\in\Xcal \land x_k\in\Xcal, \forall k\in\{0,...,t-1\}) \underset{t\to\infty}{\longrightarrow}0
\end{align}
fast enough, i.e., the probability that $x_t$ is the first state that violates the constraint decays to 0 over time. This can be the case even when the disturbances have an unbounded support. For example, consider a dynamical system that is stabilizable in the mean-square sense \cite{bernardini2012stabilizing}, i.e., $\exists c\geq0, \rho \in[0,1): \E_d^\pi[\|x_t\|_2^2]\leq c\rho^t\|x_0\|_2^2$.
Then, assuming that the origin is in the interior of the feasible set, we have $x_t\not\in\Xcal\Rightarrow\|x_t\|_2^2\geq a$, for a certain $a>0$. Therefore:
\begin{align*}
    &\P_d^\pi(x_t\not\in\Xcal \land x_k\in\Xcal, \forall k\in\{0,...,t-1\})
    \\&\leq \P_d^\pi(x_t\not\in\Xcal)
    \\& \leq 
    \P_d^\pi(\|x_t\|_2^2 \geq a) 
    \\& \leq \frac{\E_d^\pi[\|x_t\|_2^2]}{a}  
    \\& \leq \frac{c\rho^t\|x_0\|_2^2}{a} \underset{t\to\infty}{\longrightarrow} 0,
\end{align*}
where we have used the Markov inequality \cite{durrett2019probability} applied to the random variable $\|x_t\|_2^2$.
The convergence rate is exponential, since $\rho\in[0,1)$; therefore, $\E\left[\sum_{t=0}^\infty \psi_t \right]$ is bounded:
\begin{align*}
    &\sum_{t=0}^\infty \P_d^\pi(x_t\not\in\Xcal \land x_k\in\Xcal, \forall k\in\{0,...,t-1\})
    \\& \leq \sum_{t=0}^\infty  \frac{\E_d^\pi[\|x_t\|_2^2]}{a} 
    \\& \leq \frac{c}{a(1-\rho)} \|x_0\|_2^2
\end{align*}
and a sufficient condition such that \eqref{eq:cc_psi} is satisfied is $\frac{c}{a(1-\rho)} \|x_0\|_2^2 \leq  \veps$.

Mean-square stability can occur if, e.g., the disturbances are multiplicative, as in \cite{bernardini2012stabilizing}, or, with a similar argument, if the disturbances are additive with a decaying variance. In both cases, we do not need to assume that the uncertainty has a bounded support. Note, however, that the proposed conditions based on mean-square stability are neither more restrictive nor more general than assuming that the uncertainty has a bounded support, which is a common assumption in the infinite-horizon setting \cite{laurenti2025unifying}, but they suffice to show that boundedness of the support of the uncertainty is not a necessary condition to ensure feasibility of \eqref{cc_ocp}.

\subsection{Lagrange dual framework}\label{sec:duality}
We now consider the following problem, for a given $s_0\in\Scal$:
\begin{align}\label{cc_ocp_aug}
    \begin{split}
        V^\star(s_0) = \inf_{\pi\in\Gamma} \ & \E_d^\pi\left[ \sum_{t=0}^\infty \phi_t \ell(x_t, u_t) \right]
        \\ \text{s.t.} \ &  s_{t+1} =  F(s_t, u_t, d_t), \ \forall t\in\Z_{\geq0}
        \\& \E_d^\pi\left[\sum_{t=0}^\infty \psi_t \right] \leq \veps
        \\& u_t\sim\mu_t(\cdot \mid s_0,u_0,...,s_t), \ \forall t\in\Z_{\geq0}
        \\& \pi=(\mu_0, \mu_1, ...),
    \end{split}
\end{align}
where $V^\star:\Scal\to\R_{\geq0}$ is the value function.
Note that problem \eqref{cc_ocp_aug} is equivalent to \eqref{cc_ocp} for all $x_0\in\R^n$, $\xi_0=1, \psi_0=0$, and $\phi_0=1$. Indeed, in this case, the cost function equals $\sum_{t=0}^\infty\gamma^t\ell(x_t,u_t)$, and the constraint equals the one in \eqref{cc_ocp} in view of \eqref{eq:psi_constraint} and \eqref{eq:cc_psi}.
Since this is a constrained control problem, we emphasize the dependency of $V^\star$ on the initial state $s_0$. Then, $\Scal_\text{feas}:=\{s_0\in\Scal: s_0=[x_0^\top, 1, 0, 1]^\top, x_0\in\Xcal_\text{feas}\}$ denotes the set of initial conditions for which \eqref{cc_ocp_aug} is feasible. For problem \eqref{cc_ocp_aug}, the stochastic kernels $\{\mu_t\}_{t=0}^\infty$, are defined over $\Ucal\times(\Scal\times\Ucal)^{t-1}\times\Scal$.

Now, let us define the Lagrangian associated to \eqref{cc_ocp_aug}:
\begin{align}
L(s_0,\pi,\lambda) :=  
\E_d^\pi\Bigg[ 
        \sum_{t=0}^\infty \phi_t \ell(x_t, u_t) + \lambda\left(\sum_{t=0}^\infty \psi_t - \veps \right) \Bigg] \label{eq:lagrangian}
\end{align}
where, for compactness, we have implicitly substituted the system dynamics $s_{t+1}=F(s_t, u_t, d_t)$, and $u_t\sim\mu_t(\cdot|s_0,u_0 ..., s_t),\forall t \in\Z_{\geq0}$. Then, we consider the dual of \eqref{cc_ocp_aug}, for $s_0\in\Scal$:
\begin{align}\label{dual}
    D^\star(s_0):=  
    \sup_{\lambda\geq0} \inf_{\pi\in\Gamma} \ L(s_0,\pi,\lambda),
\end{align}
where the inner function $\inf_{\pi\in\Gamma} L(s_0,\pi,\lambda)$ is the \emph{dual function}.
The dual variable $\lambda$ is a scalar since, after substituting the system dynamics, the chance constraint is the only explicit constraint in the problem. Moreover, the optimal dual variable is, in general, a function $\lambda^\star:\Scal\to\R_{\geq0}$, as it depends on the initial state $s_0$. To see this, note that the constraint in \eqref{cc_ocp_aug} couples the entire trajectory starting from $s_0$. Hence, by explicitly substituting the system dynamics, the quantity $\E_d^\pi\left[\sum_{t=0}^\infty \psi_t \right]$ depends only on $s_0$ and the policy $\pi$, i.e., the decision variable. A different situation would be a control problem with an individual constraint for each state $s_t, t\in\Z_{\geq0}$; in this case, $\lambda$ would depend on the current state $s_t$.
From duality theory, weak duality always holds, i.e., $D^\star(s_0)\leq V^\star(s_0), \forall s_0\in\Scal$. Therefore, if the dual is unbounded from above, the primal is infeasible. Also, whenever $D^\star(s_0)= V^\star(s_0), \forall s_0\in\Scal_\text{feas}$, we say that strong duality holds for \eqref{cc_ocp_aug} and \eqref{dual}. 

Leveraging classical results from duality in constrained MDPs \cite{hernandez-lerma2000constrained, paternain2019constrained, schmid2025computing}, we can show a strong duality property for \eqref{cc_ocp_aug} and \eqref{dual}. This result is based on a well-known sufficient condition for strong duality that relies on the \emph{perturbation function}.
For any $s_0\in\Scal_\text{feas}$, define  $P_{s_0}:[-\veps,1-\veps]\to\R_{\geq0}$ 
\begin{align}\label{perturbed}
        \begin{split}
            P_{s_0}(\eta) = \inf_{\pi\in\Gamma} & \ \E_d^\pi\left[ \sum_{t=0}^\infty \phi_t \ell(x_t, u_t) \right]
            \\ \text{s.t.}& \ \E_d^\pi\left[\sum_{t=0}^\infty \psi_t \right] \leq \veps + \eta,
        \end{split}
    \end{align}
    where the (augmented) system dynamics are implicitly substituted again. As we show in the next proposition, the continuity of the perturbation function is strictly related to strong duality.
    
\begin{proposition}\label{prop:strong_duality}
    For all $s_0\in\Scal_\text{feas}$, assume that Slater's condition holds, i.e., there exists a policy $\pi\in\Gamma$ such that $P_{s_0}(\eta)<\infty$ for some $\eta>0$. Then, strong duality holds for \eqref{cc_ocp_aug} and \eqref{dual}, for all $s_0\in\Scal_\text{feas}$.
\end{proposition}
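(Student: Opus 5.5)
The plan is to show that the perturbation function $P_{s_0}$ is convex on its domain and finite in a neighbourhood of $\eta=0$. Then I will apply the standard perturbation-function criterion for strong duality: for a convex program whose value function is convex and finite near the origin, with the origin in the interior of its effective domain, the primal value equals the dual value. This works even though the primal decision variable (a policy) enters nonlinearly, because the argument only needs convexity of $P_{s_0}$ in $\eta$, not of the program itself.

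\textbf{Step 1: convexity.} I will use the occupation-measure viewpoint, as in the constrained-MDP results of \cite{hernandez-lerma2000constrained, paternain2019constrained}. Both $\E_d^\pi[\sum_t\phi_t\ell(x_t,u_t)]$ and $\E_d^\pi[\sum_t\psi_t]$ are linear functionals of the (discounted, respectively undiscounted but summable) state–action occupation measures induced by $\pi$ from $s_0$. Moreover, the set of such measures achievable by history-dependent stochastic policies in $\Gamma$ is convex.

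Concretely, take $\eta_1,\eta_2$ and $\alpha\in[0,1]$, and let $\pi_1,\pi_2$ be $\delta$-optimal for $P_{s_0}(\eta_1)$ and $P_{s_0}(\eta_2)$. I construct the mixture policy $\pi_\alpha$ that draws a coin at $t=0$ and follows $\pi_1$ with probability $\alpha$ and $\pi_2$ otherwise. Since history-dependent stochastic kernels can realize this mixture (equivalently, by the convexity of the set of strategic measures), both expectations are the corresponding convex combinations. Hence $\pi_\alpha$ is feasible for level $\alpha\eta_1+(1-\alpha)\eta_2$, which gives
\[
P_{s_0}(\alpha\eta_1+(1-\alpha)\eta_2)\le \alpha P_{s_0}(\eta_1)+(1-\alpha)P_{s_0}(\eta_2)+\delta.
\]
Letting $\delta\to0$ yields convexity. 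The main obstacle is making the mixture rigorous within $\Gamma$ as defined, since the kernels $\mu_t$ depend only on the history and not on an external coin. I would handle it by defining $\mu_t(\cdot|h_t)$ through Bayesian posterior weights of $\pi_1$ versus $\pi_2$ given $h_t$, which induces the mixed strategic measure. This is Kuhn's theorem for the mixture; alternatively, one can cite convexity of the strategic-measure set from \cite{hernandez-lerma2000constrained}.

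\textbf{Step 2: finiteness and the duality argument.} $P_{s_0}$ is nonincreasing in $\eta$, nonnegative since $\ell\ge0$, and finite at $\eta=0$ because $s_0\in\Scal_\text{feas}$. Slater's condition gives $P_{s_0}(\eta_S)<\infty$ for some $\eta_S>0$, so by monotonicity $P_{s_0}$ is finite on $[-\veps',\eta_S]$ for a small interval around $0$. Indeed, finiteness for negative $\eta$ is not even needed, since monotonicity and a finite convex function on $[0,\eta_S]$ suffice at $0$. I need $0$ to be an interior point of the domain, which holds because $\veps+\eta_S$ exceeds $\veps$.

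A convex function finite on an open interval containing $0$ has a subgradient $-\lambda^\star$ at $0$, with $\lambda^\star\ge0$ by monotonicity. Thus
\[
P_{s_0}(\eta)\ge P_{s_0}(0)-\lambda^\star\eta \quad \text{for all } \eta.
\]
For any $\pi$, set $\eta=\E_d^\pi[\sum_t\psi_t]-\veps$. Then $\E_d^\pi[\text{cost}]\ge P_{s_0}(\eta)\ge V^\star(s_0)-\lambda^\star(\E_d^\pi[\sum\psi_t]-\veps)$, i.e.\ $L(s_0,\pi,\lambda^\star)\ge V^\star(s_0)$. Taking the infimum over $\pi$ gives $D^\star(s_0)\ge V^\star(s_0)$, and weak duality closes the gap. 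Both expectations are finite, since $\sum_t\psi_t\le1$ and the cost is finite for the policies that matter, so the rearrangement is legitimate.
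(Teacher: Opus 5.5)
Your Step~1 is the paper's argument: convexity of $P_{s_0}$ by mixing near-optimal policies, with your $\delta$-optimal policies in place of the paper's minimizing sequences. Your remark that the mixture must be realized inside $\Gamma$ (posterior weights, Kuhn's theorem) fills a detail the paper skips. Your closing subgradient argument is also a valid, self-contained substitute for the paper's appeal to the perturbation-duality corollary.

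The gap is in Step~2, in the one hypothesis that this closing argument needs. $0$ must lie in the interior of the effective domain of $P_{s_0}$, and that requires $P_{s_0}(\eta)<\infty$ for some $\eta<0$, i.e., a policy with $V^\pi_\text{risk}(s_0)<\varepsilon$. Finiteness at some $\eta_S>0$ cannot give this. Since $P_{s_0}$ is nonincreasing, finiteness propagates to the right of a point, not to the left. The observation that $\varepsilon+\eta_S>\varepsilon$ only shows that points to the right of $0$ are in the domain. So your claim that $P_{s_0}$ is finite on $[-\varepsilon',\eta_S]$ ``by monotonicity'' is unfounded, and your fallback claim that negative-side finiteness is not needed is false.

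To see why it matters, suppose $\inf_{\pi}V^\pi_\text{risk}(s_0)=\varepsilon$. Then the domain is $[0,1-\varepsilon]$ and $0$ is a boundary point. A convex nonincreasing function on this interval can have right derivative $-\infty$ at $0$ (e.g.\ $\eta\mapsto(1-\sqrt{\eta})^2$), so no finite $\lambda^\star$ exists. It can even jump at $0$: take $P(0)=1$ and $P(\eta)=0$ for $\eta>0$, which is convex. Recall that $\inf_{\pi}L(s_0,\pi,\lambda)=\inf_\eta\{P_{s_0}(\eta)+\lambda\eta\}$; in the jump case this gives $\sup_{\lambda\geq 0}\inf_{\eta}\{P(\eta)+\lambda\eta\}=0<P(0)$, a duality gap. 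Hence convexity plus finiteness on $[0,\eta_S]$ does not yield strong duality.

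Note also that whenever $V^\star(s_0)=P_{s_0}(0)$ is finite, monotonicity already gives $P_{s_0}(\eta)<\infty$ for every $\eta>0$. The hypothesis as literally written is therefore automatic and cannot be what drives the result. The paper's proof in fact uses the standard Slater condition: it picks an interval $[-\bar\eta,\bar\eta]$ on which $P_{s_0}$ is finite.

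If you assume strict feasibility instead (some $\pi$ with $V^\pi_\text{risk}(s_0)<\varepsilon$), monotonicity gives finiteness on $[\eta,1-\varepsilon]$ for some $\eta<0$. Then $0$ is interior, and the rest of your proof goes through. It additionally exhibits a finite optimal multiplier.
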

\begin{proof}
     A key step to prove strong duality is to show that $P_{s_0}$ is lower semicontinuous in $\eta=0$ \cite[Corollary 4.3.6]{borwein2006convex}. A sufficient condition is to show that $P_{s_0}$ is convex in an arbitrarily small set around $\eta=0$. Therefore, consider $\Acal:=\{\eta\in\R: \|\eta\|_2\leq\Bar{\eta}\}$, where $\Bar{\eta}$ is a positive constant such that $P_{s_0}(\eta)<\infty$ for all $\eta\in\Acal$, which is possible in view of Slater's condition. Consider any $\eta_1, \eta_2 \in \Acal$, with $\eta_1<0<\eta_2$. We now show that $P_{s_0}(\kappa\eta_1 + (1-\kappa)\eta_2) \leq \kappa P_{s_0}(\eta_1) + (1-\kappa)P_{s_0}(\eta_2)$, for any $\kappa\in[0,1]$. 
     Since the problem is feasible for $\eta\in\Acal$, the corresponding optimal values $P_{s_0}(\eta_1)$ and $P_{s_0}(\eta_2)$ exist and are bounded. Therefore, there exist sequences of minimizing feasible policies $\{\pi_1^i\}_{i=1}^{\infty}$, $\{\pi_2^i\}_{i=1}^{\infty}$ such that the following are satisfied:
    \begin{align}
        & \lim_{i\to\infty} \E_d^{\pi_1^i}\left[ \sum_{t=0}^\infty \phi_t \ell(x_t, u_t) \ \Big| \ s_0=s\right] = P_{s_0}(\eta_1) \label{eq:convergence1}
        \\& \lim_{i\to\infty} \E_d^{\pi_2^i}\left[ \sum_{t=0}^\infty \phi_t \ell(x_t, u_t) \ \Big| \ s_0=s\right] = P_{s_0}(\eta_2) \label{eq:convergence2}
        \\& \E_d^{\pi_1^i}\left[\sum_{t=0}^\infty 
        \psi_t \right] \leq \veps + \eta_1 , \forall i\in\Z_{\geq1} \label{eq:convergence3} 
        \\& \E_d^{\pi_2^i}\left[\sum_{t=0}^\infty  \psi_t \right] \leq \veps + \eta_2, \forall i\in\Z_{\geq1}. \label{eq:convergence4}
    \end{align}
    Then, we can construct a sequence of mixed policies $\{\Bar{\pi}^i\}$ such that $\Bar{\pi}^i$ selects $\pi_1^i$ with probability $\kappa$ and $\pi_2^i$ with probability $1-\kappa$. Hence, in view of the linearity of expectation and of \eqref{eq:convergence1}--\eqref{eq:convergence2}, the optimal value of $\Bar{\pi}^i$ converges to $\Bar{P}:=\kappa P_{s_0}(\eta_1) + (1-\kappa)P_{s_0}(\eta_2)$, for $i\to\infty$. For analogous reasons, the risk value of $\Bar{\pi}^i$ is not larger than $\veps + \kappa\eta_1 + (1-\kappa)\eta_2$, in view of \eqref{eq:convergence3}--\eqref{eq:convergence4}. Hence, $\Bar{\pi}^i$ is a sequence of feasible, although potentially suboptimal, policies for problem \eqref{perturbed} with $\eta=\kappa\eta_1 + (1-\kappa)\eta_2$. 
    Therefore, $P_{s_0}(\kappa\eta_1 + (1-\kappa)\eta_2)\leq \Bar{P}=\kappa P_{s_0}(\eta_1) + (1-\kappa)P_{s_0}(\eta_2),$ which implies that $P_{s_0}$ is convex in $\Acal$, hence continuous in its interior. In particular, $P_{s_0}$ is continuous for $\eta=0$, which implies strong duality for \eqref{cc_ocp_aug} and \eqref{dual}.
\end{proof}

Note that this proof is similar to the one in \cite{paternain2019constrained}, with the difference that we do not necessarily assume that the optimal value is attained in \eqref{perturbed}. This is indeed not necessary, since duality is a property that relates the optimal values of \eqref{cc_ocp_aug} and \eqref{dual}, independently of the existence of the minimizers.



In view of Proposition \ref{prop:strong_duality}, \eqref{cc_ocp_aug} is equivalent to \eqref{dual} for all $s_0\in\Scal_\text{feas}$; therefore, we can replace \eqref{cc_ocp_aug} by
\begin{align}\label{cc_ocp_dual}
\begin{split}
    V^\star(s_0)
    & = 
    \sup_{\lambda\geq0} \inf_{\pi\in\Gamma} L(s_0,\pi,\lambda)
    \\& = \sup_{\lambda\geq0} \inf_{\pi\in\Gamma} \E\Bigg[ 
        \sum_{t=0}^\infty (\phi_t \ell(x_t, u_t) + \lambda\psi_t) \Bigg] - \lambda\veps 
\end{split}
\end{align}
which is obtained from the definition of the Lagrangian $L$ in \eqref{eq:lagrangian} and by rearranging some terms.
A fundamental advantage of solving \eqref{cc_ocp_dual} instead of \eqref{cc_ocp_aug}, is that, for given $s_0\in\Scal_\text{feas}$ and $\lambda\geq0$, the inner minimization problem is a classical unconstrained model-based RL problem with the augmented stage cost
$$\phi\ell(x,u) + \lambda\psi,$$
which can be solved, e.g., via (approximate) dynamic programming \cite{sutton1998reinforcement}. In the next section, we detail our solution approach for \eqref{cc_ocp_dual}.


\section{Dual-ascent algorithm}\label{sec:dual_ascent}
In this section, we propose to solve \eqref{cc_ocp_dual} by means of a dual-ascent algorithm. Solving \eqref{cc_ocp_dual} via  dual ascent means maximizing the dual function
$\min_{\pi\in\Gamma} L(s_0,\pi,\lambda)$ with respect to $\lambda$; therefore, each update of the dual variable also involves optimizing the Lagrangian with respect to $\pi$, for fixed $\lambda$. In other words, \eqref{cc_ocp_dual} can be solved by updating the variables $\pi$ and $\lambda$ alternately, i.e., by updating one variable at a time and keeping the other one constant. Therefore, we first study the inner minimization problem, and establish convergence properties and the existence of a deterministic Markov policy that attains the optimal value; second, we prove that the proposed dual-ascent algorithm to solve \eqref{cc_ocp_dual} converges to an optimal and feasible deterministic policy for \eqref{cc_ocp_aug}.


\subsection{Inner minimization problem}\label{sec:inner_prob}
Let us now consider the solution of the inner minimization problem in \eqref{cc_ocp_dual} for a fixed $\lambda\in\R_{\geq0}$, which is instrumental for our primal-dual scheme. For this purpose, consider the function $J:\Scal\times\R_{\geq0}\to\R_{\geq0}$, defined as
\begin{align}\label{eq:primal_problem}
    J(s,\lambda):= \inf_{\pi\in\Gamma} \E_d^\pi\Bigg[ 
        \sum_{t=0}^\infty\left( \phi_t \ell(x_t, u_t) + \lambda \psi_t \right) \ \Big| \ s_0=s \Bigg].
\end{align}
This is an unconstrained control problem; therefore, we parametrize $J$ as a function of a generic state $s\in\Scal$. This is in contrast to $\lambda^\star$, which, as we have emphasized in Section \ref{sec:duality}, depends specifically on $s_0$.

Note that, for a given $s\in\Scal$, $J(s,\lambda) - \lambda\veps$ corresponds to the dual function, i.e., the objective of the maximization problem in \eqref{cc_ocp_dual}. However, for the purpose of solving the inner minimization problem with a fixed $\lambda\geq0$, we have dropped $\veps$ since it is constant with respect to $\pi$.
Then, \eqref{eq:primal_problem} corresponds to an unconstrained model-based RL problem.
A natural way to solve \eqref{eq:primal_problem} is via the following DP iterations \cite{bertsekas1996stochastic}:
\begin{align}
    & J_0(s,\lambda) = 0, \label{eq:dp1}
    \\& \begin{aligned}[t] \label{eq:dp2}
        J_{h+1}(s,\lambda) = \inf_{u\in\Ucal} \{
        & \phi\ell(x,u) + \lambda\psi 
        \\& + \E_d[J_h(F(s, u, d), \lambda) \ | \ s,u,\lambda]\}. 
    \end{aligned}
\end{align}
As shown later, under appropriate assumptions on the cost function and the constraint set, \eqref{eq:primal_problem} admits a stationary, deterministic, Markov optimal policy, and the iterations \eqref{eq:dp1}--\eqref{eq:dp2} converge to the optimal value function \cite{bertsekas1996stochastic}. However, for this to hold, the first step is to prove an important continuity property for the functions resulting from \eqref{eq:dp1}--\eqref{eq:dp2}. We state an additional assumption on the stage cost, which is common in several RL settings \cite{sutton1998reinforcement, bertsekas1996stochastic}:
\begin{assumption}\label{ass:bounded_cost}
    The stage cost is bounded in its domain, i.e., there exists an $\Bar{\ell}\geq0$ such that
    $\ell(x,u) \leq \Bar{\ell}, \  \forall (x,u)\in\R^{n+m}.$
\end{assumption}

In addition, we require an additional assumption needed to prove the continuity of $J_h$ in \eqref{eq:dp1}--\eqref{eq:dp2}:
\begin{assumption}\label{ass:0_measure}
    We assume that there exists a set $\Bar{\Xcal}\subseteq\Xcal$ such that 
    $
        \P(f(x,u,d) \in \partial \Xcal) = 0, \ \forall x\in\Bar{\Xcal}, \forall u\in\Ucal.
    $
\end{assumption}

This assumption is essentially needed to avoid possible discontinuities due to the indicator function in \eqref{eq:aug_dynamics}. Essentially, Assumption \ref{ass:0_measure} ensures that discontinuities are allowed over $\Bar{\Xcal}$ provided that they correspond to 0-measure events, which will be crucial to prove the continuity of $J_h$ in \eqref{eq:dp1}--\eqref{eq:dp2}. In Section \ref{sec:value_structure}, we will see that this assumption is verified in several cases, and we will give more insights on the computation of $\Bar{\Xcal}$.

Let us also define $\Bar{\Scal}:=\Bar{\Xcal}\times\{0,1\}^2\times[0,1]$. In the following proposition, we prove the continuity of $J_h$ defined in \eqref{eq:dp1}--\eqref{eq:dp2} over $\Bar{\Scal}\times\R_{\geq0}$.
\begin{proposition}\label{prop:continuity_iteration}
    Under Assumptions \ref{ass:cont}--\ref{ass:0_measure}, the functions $J_h, \forall h\in\Z_{\geq0}$, resulting from the DP iterations \eqref{eq:dp1}--\eqref{eq:dp2} are continuous over $\Bar{\Scal}\times\R_{\geq0}$.
\end{proposition}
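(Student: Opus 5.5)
We argue by induction on $h$, proving the slightly stronger statement that $J_h$ is continuous and bounded on $\Bar{\Scal}\times\R_{\geq0}$. Continuity is understood with respect to $(x,\phi,\lambda)$ for each fixed value of the discrete components $(\xi,\psi)\in\{0,1\}^2$. Since $\{0,1\}^2$ carries the discrete topology, this is the same as continuity on the product space. The base case $J_0\equiv 0$ is trivial. For the inductive step, we assume $J_h$ is continuous on $\Bar{\Scal}\times\R_{\geq0}$. Boundedness holds on every set of the form $\{\lambda\leq\Bar{\lambda}\}$: by Assumption \ref{ass:bounded_cost} and $\phi\leq1$, $\psi\leq1$, we get $0\leq J_h\leq h(\Bar{\ell}+\Bar{\lambda})$. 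We then show that the function
$$Q_h(s,u,\lambda):=\phi\ell(x,u)+\lambda\psi+\E_d[J_h(F(s,u,d),\lambda)]$$
is jointly continuous in $(s,u,\lambda)$. Because $\Ucal$ is compact (Assumption \ref{ass:cont}), Berge's maximum theorem with a constant compact-valued correspondence then gives continuity of $J_{h+1}=\inf_{u\in\Ucal}Q_h$, and the infimum is attained.

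The terms $\phi\ell(x,u)+\lambda\psi$ are continuous by Assumption \ref{ass:cont}. For the expectation we fix a sequence $(s^k,u^k,\lambda^k)\to(s,u,\lambda)$ with $x\in\Bar{\Xcal}$ and use dominated convergence. The integrands $J_h(F(s^k,u^k,d),\lambda^k)$ are uniformly bounded, since $\lambda^k$ eventually lies in a bounded set, so the main task is pointwise convergence for almost every $d$. Eventually the discrete components agree: $\xi^k=\xi$ and $\psi^k=\psi$. Consider the components of $F$ in turn. The $x$-component $f(x^k,u^k,d)\to f(x,u,d)$ by continuity of $f$. The $\phi$-component $\gamma\phi^k\to\gamma\phi$. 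The indicator $\mathbf{1}_{f(x^k,u^k,d)\in\Xcal}$ converges to $\mathbf{1}_{f(x,u,d)\in\Xcal}$ whenever $f(x,u,d)\notin\partial\Xcal$. Indeed, if $f(x,u,d)$ lies in the interior of $\Xcal$ or in the complement of $\Xcal$ (both open sets), nearby points eventually share the same membership. By Assumption \ref{ass:0_measure} with $x\in\Bar{\Xcal}$, the exceptional set of $d$ has probability zero. Hence, for almost every $d$, the discrete successor components $\xi_{+},\psi_{+}$ are eventually constant along the sequence, and the continuous components converge.

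The main obstacle is that the inductive hypothesis only gives continuity of $J_h$ on $\Bar{\Scal}$, while the successor state $f(x,u,d)$ need not lie in $\Bar{\Xcal}$. To handle this, we first try to strengthen the induction so that continuity is claimed at every point reachable with positive probability. The key observation is that once $\xi_{+}=0$, the constraint terms are frozen: $\xi$ stays $0$ and $\psi$ becomes $0$ from the next step on. Consequently $J_h$ on $\{\xi=0\}$ reduces to a pure discounted cost-to-go with continuous data, and its continuity over all of $\R^n$ follows by a standard induction using only Assumptions \ref{ass:cont}--\ref{ass:bounded_cost}, with no indicator involved. On $\{\xi=1\}$, the successor lies in $\Xcal$, and we would need either $\Xcal$ forward-invariant into $\Bar{\Xcal}$ or that Assumption \ref{ass:0_measure} effectively holds with $\Bar{\Xcal}=\Xcal$ on the relevant branch. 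If this strengthening fails in general, we fall back on interpreting $\Bar{\Xcal}$ as chosen so that $f(\Bar{\Xcal},\Ucal,\Delta)\cap\Xcal\subseteq\Bar{\Xcal}$ almost surely. This closes the induction, and dominated convergence then completes the proof.
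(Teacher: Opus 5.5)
You follow the same route as the paper (induction, almost-sure convergence of $F$ via Assumption~\ref{ass:0_measure}, dominated convergence, Berge's theorem), and you correctly isolate the step that fails. Dominated convergence needs $J_h(F(s^k,u^k,d),\lambda^k)\to J_h(F(s,u,d),\lambda)$ for almost every $d$, and the successors $F(s^k,u^k,d)$ need not lie in $\Bar{\Scal}$. The inductive hypothesis, however, only gives continuity relative to $\Bar{\Scal}$. The paper's proof applies the inductive hypothesis at $F(s,u,d)$ without addressing this, so it has the same hole. Your repair of the violation branch is sound: for $h\geq 1$, on $\{\xi=0\}$ one has $J_h(s,\lambda)=\lambda\psi+W_h(x,\phi)$. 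Here $W_h$ are the value-iteration iterates of the unconstrained problem, which are continuous on all of $\R^n\times[0,1]$ under Assumptions~\ref{ass:cont}--\ref{ass:bounded_cost}. The branch $\xi=1$, $f(x,u,d)\in\Xcal$, however, cannot be closed from Assumptions~\ref{ass:cont}--\ref{ass:0_measure}. Your fallback is therefore not an interpretation of $\Bar{\Xcal}$ but an additional hypothesis. It is genuinely needed, because the proposition as stated is false.

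Here is a counterexample. Take $x=(a,b)\in\R^2$, $f(x,u,d)=(2a,d)$ with $d$ uniform on $[-1,1]$, $\ell\equiv 0$, $\Xcal=[-1,1]^2$, and $\Ucal=[0,1]$. All assumptions hold, since $x_0=(0,0)$ is feasible. Take $\Bar{\Xcal}=\{(a,b)\in\Xcal: |a|\neq\tfrac12\}$: from such points the successor lies on $\partial\Xcal$ only if $|d|=1$, which mirrors the paper's own recipe for disturbance-free components. For $s=(a,b,1,0,\phi)$ the iterates give $J_3(s,\lambda)=\lambda\,\mathbb{P}(x_1\notin\Xcal \text{ or } x_2\notin\Xcal)=\lambda\mathbf{1}_{|a|>1/4}$. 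This jumps at $a=\tfrac14$, even though every point of $\Xcal$ with $|a-\tfrac14|<\tfrac18$ lies in $\Bar{\Xcal}$. The successor $(\tfrac12,d)$ lies in $\Xcal\setminus\Bar{\Xcal}$ with probability one, which is exactly your obstacle.

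So you should state an explicit extra hypothesis, for example $\mathbb{P}(f(x,u,d)\in\Xcal\setminus\Bar{\Xcal})=0$ for all $x\in\Bar{\Xcal}$ and $u\in\Ucal$ (automatic if $\Bar{\Xcal}=\Xcal$). Under it, your argument proves the corrected statement. The pieces are: eventual agreement of the discrete components, the bound $h(\Bar{\ell}+\Bar{\lambda})$, almost-everywhere convergence via your $\xi=0$ reduction and the invariance on the $\xi=1$ branch (countably many null sets), dominated convergence, and Berge. Without such a hypothesis, neither your proof nor the paper's establishes the proposition, and no proof can.
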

\begin{proof}
    We proceed by induction to prove the continuity.
    The claim is true for $h=0$, since $J_0=0$. Let us now assume $J_h$ is continuous in its arguments. Then, we prove $J_{h+1}$ is continuous. Let us consider the function $G_h$ defined by
    $$G_h(s,u,\lambda):= \phi\ell(x,u) + \lambda\psi + \E_d\left[ J_h(F(s,u,d), \lambda) | s,u,\lambda \right]$$
    such that we have 
    \begin{align}\label{eq:G}
        J_{h+1}(s,\lambda) = \inf_{u\in\Ucal} G_h(s, u,  \lambda).
    \end{align}
    To prove the continuity of $J_{h+1}$, we first prove that $G_h$ is a continuous function, and then invoke Berge's Maximum Theorem \cite[Section 6.3]{berge1877topological} for parametric optimization. 
    First of all, we note that $G_h$ is the sum of three functions: since continuity is preserved under the sum, we focus on the three components separately. From Assumption \ref{ass:cont}, $\ell$, the stage cost, is a continuous mapping from $\R^n\times \R^m$ to $\R$. The term $\lambda\psi$ is trivially continuous in $\lambda$, for each $\psi\in\{0,1\}$. Now, let us analyze the third term; in particular, we study the continuity of 
    $\E_d\left[ J_h(F(s,u,d), \lambda) | s,u,\lambda \right]$. 
    Consider a sequence $\{(s_l,u_l,\lambda_l)\}_{l=0}^\infty$ such that $(s_l,u_l,\lambda_l) \to (s,u,\lambda)$, with $s_l\in\Bar{\Scal}, s\in\Bar{\Scal}, u\in\Ucal, \lambda_l\in\R_{\geq0}, \lambda\in\R_{\geq0}, \forall l\in\Z_{\geq0}$. The Continuous Mapping Theorem \cite[Theorem 3.2.10]{durrett2019probability} yields
    \begin{align}\label{eq:F_cont}
        F(s_l,u_l,d) \overset{\text{a.s.}}{\to} F(s,u,d), \ \forall d\in\Delta
    \end{align}
    since the dynamics of $x$ and $\phi$ are governed by a continuous function in view of Assumption \ref{ass:cont}, and the binary states converge up to zero-measure events in view of Assumption \ref{ass:0_measure}.  Also, the functions $\{J_h(s,\lambda)\}_{h=0}^\infty$ are bounded by an integrable (constant) function for all $s\in\R^n\times\{0,1\}^2\times[0,1]$ and $\lambda\in\R_{\geq0}$, since
    \begin{align*}
        J_h(s,\lambda) 
        & \leq \sup_{u_t\in\Ucal,\forall t}  \E_d^\pi \left[ \sum_{t=0}^{\infty} \gamma^t\ell(x_t, u_t)\right] + \lambda \sup_{\pi\in\Gamma}  \E_d^\pi \left[ \sum_{t=0}^{h-1}  \psi_t \right]
        \\& \leq \frac{\Bar{\ell}}{1-\gamma} + \lambda,
    \end{align*}
    where we have used known identities for the geometric series, and recalling that $\E_d^\pi \left[ \sum_{t=0}^h  \psi_t \right] \leq 1$ since it represents a probability in view of \eqref{eq:psi_constraint}. Hence, the Dominated Convergence Theorem applies, and together with \eqref{eq:F_cont}, it ensures that the expectation operator is a continuous function of $s,u$, and $\lambda$ (cf.\ \cite[Theorem 7.43]{shapiro2021lectures}.  Therefore, the function $G_h$ is continuous in $s,u$, and $\lambda$. Since $\Ucal$ is a compact set in view of Assumption \ref{ass:cont}, the infimum in \eqref{eq:G} is attained. 
    Then, Berge's Maximum Theorem \cite{berge1877topological} ensures that $J_{h+1}$, resulting from the parametric optimization problem \eqref{eq:G}, is continuous in its arguments.
\end{proof}
    
With this continuity result, we can prove the following proposition:
\begin{proposition}\label{prop:primal_convergence}
    Consider the DP iterations \eqref{eq:dp1}--\eqref{eq:dp2}. Under Assumptions \ref{ass:cont}--\ref{ass:0_measure}, $\lim_{h\to\infty} J_{h}(s,\lambda) = J(s,\lambda)$, $\forall s\in{\Bar{\Scal}}$. In particular, $J$ satisfies the following Bellman optimality equation:
    \begin{align}
        & J(s,\lambda) \label{eq:fixed_point}
        \\& = \min_{u\in\Ucal} \left\{ \phi \ell(x, u) + \lambda \psi + \E_d[J(F(s, u, d),\lambda) | s,u,\lambda] \right\}, \nonumber
    \end{align}
    and the optimal value in \eqref{eq:primal_problem} is attained by a deterministic, stationary, Markov policy, defined over $\Bar{\Scal}\times\R_{\geq0}$, computed by:
    \begin{align}\label{eq:policy}
        \pi(s, \lambda)\in\arg\min_{u\in\Ucal} \left\{ \phi \ell(x, u) + \lambda \psi + \E_d[J(F(s, u, d),\lambda)] \right\}.
    \end{align}
\end{proposition}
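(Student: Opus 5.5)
We plan to prove Proposition \ref{prop:primal_convergence} with the standard value-iteration argument for discounted problems with bounded costs, adapted to the augmented state. The only subtlety is that the term $\lambda\psi$ is not discounted. We deal with it by showing that the accumulated penalty is uniformly bounded: $\psi_t=1$ can occur at most once along any trajectory, because $\xi$ is nonincreasing and $\psi_{t+1}=\xi_t-\xi_{t+1}$.

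\textbf{Step 1: monotonicity and existence of the limit.} Since $J_0=0$ and the stage cost $\phi\ell+\lambda\psi$ is nonnegative, the Bellman operator $T$ defined by the right-hand side of \eqref{eq:dp2} is monotone. Induction then gives $J_{h}\le J_{h+1}$. The bound in the proof of Proposition \ref{prop:continuity_iteration} gives $J_h\le \bar\ell/(1-\gamma)+\lambda$. Hence $J_\infty:=\lim_h J_h$ exists pointwise and is bounded.

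\textbf{Step 2: $J_\infty=J$.} For the inequality $J_\infty\le J$, note that $J_h$ is the optimal $h$-stage truncated cost. For any policy, its truncated cost is at most its infinite-horizon cost, because all stage costs are nonnegative. Taking the infimum over $\Gamma$ gives $J_h\le J$, so $J_\infty\le J$.

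For the reverse inequality we bound the tail of any policy's cost. For $t\ge h$, the tail $\sum_{t\ge h}\gamma^t\ell(x_t,u_t)$ is at most $\gamma^h\bar\ell/(1-\gamma)$. The tail penalty satisfies $\lambda\,\E[\sum_{t\ge h}\psi_t]\le\lambda\,\P(\text{first violation at time }\ge h)$.

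This penalty tail is the main obstacle, since it is not uniformly small over policies. We would circumvent it by working directly with the operator. We show that $J_\infty$ is a fixed point of $T$, and that any bounded fixed point $\bar J\ge 0$ of $T$ with a minimizing selector $\mu$ dominates $J$. The domination follows by iterating $\bar J=T_\mu \bar J$ for $h$ steps:
\begin{align*}
\bar J(s,\lambda)\ \ge\ \E^{\mu}\Big[\sum_{t=0}^{h-1}\big(\phi_t\ell+\lambda\psi_t\big)\Big],
\end{align*}
using $\bar J\ge 0$. Letting $h\to\infty$ by monotone convergence shows that the stationary policy $\mu$ has cost at most $\bar J$, so $J\le \bar J$. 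Combined with $J_\infty\le J$, this yields $J_\infty=J$ and shows that $\mu$ attains the optimum.

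\textbf{Step 3: fixed point and attainment.} We need $T J_\infty=J_\infty$. The inequality $J_\infty\ge TJ_\infty$ requires exchanging the limit and the infimum, which is where the continuity and compactness hypotheses enter. Proposition \ref{prop:continuity_iteration} makes each $G_h$ continuous on $\bar{\Scal}\times\Ucal\times\R_{\ge0}$. The sequence $G_h$ is nondecreasing in $h$ (by monotonicity of $J_h$) and converges to $G_\infty$ by monotone convergence inside the expectation. For nondecreasing sequences of lower semicontinuous functions on the compact set $\Ucal$, one has $\lim_h\min_u G_h=\min_u\lim_h G_h$, by the standard Dini-type/compactness argument from \cite{bertsekas1996stochastic}. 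This gives $J_\infty=\min_u G_\infty=TJ_\infty$. The reverse inequality $J_\infty\le TJ_\infty$ is immediate from $J_h\le J_\infty$ and monotonicity of $T$.

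Since $G_\infty$ is a monotone limit of continuous functions, it is lower semicontinuous. The minimum over the compact set $\Ucal$ is therefore attained, which justifies writing $\min$ in \eqref{eq:fixed_point}. A measurable selection theorem then gives a Borel selector realizing \eqref{eq:policy}, which is a deterministic, stationary, Markov policy on $\bar{\Scal}\times\R_{\ge0}$. By Step 2 this policy is optimal.

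One more point needs care: next states $F(s,u,d)$ may leave $\bar{\Scal}$. We expect this to be harmless, since monotone limits and the bound are valid on all of $\Scal$, and continuity is only used at the current state.
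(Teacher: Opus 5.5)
Your proposal is correct and follows essentially the paper's route. The paper just invokes the results of \cite{bertsekas1996stochastic} for nonnegative-cost models (monotone value iteration from $J_0=0$, the fixed-point equation, and a stationary optimal policy once the level sets of $G_h$ in $u$ are compact, which it takes from Proposition~\ref{prop:continuity_iteration}), and your Steps 1--3 are precisely the standard argument behind those citations.

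One caveat, which the paper's proof also passes over: your closing remark that continuity is only needed at the current state is too quick. Iterating $J_\infty=T_\mu J_\infty$ in Step 2 requires the fixed-point identity and the minimizing selector at every state the closed loop visits, and trajectories starting in $\Bar{\Scal}$ can reach states with $\xi=1$ and $x\in\Xcal\setminus\Bar{\Xcal}$. One way to close this is to show inductively that, for fixed $\lambda$, $G_h$ is lower semicontinuous in $(s,u)$ on all of $\Scal\times\Ucal$; lower semicontinuity is all that the Dini-type exchange and the selection step need. The induction uses that $\Xcal$ is closed and that $J_h$ at $(\xi,\psi)=(1,0)$ never exceeds $J_h$ at $(\xi,\psi)=(0,1)$ for the same $(x,\phi,\lambda)$.
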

\begin{proof}
    The convergence of the DP iterations \eqref{eq:dp1}--\eqref{eq:dp2} follows from the fact that the sequence $\{J_h\}_{h\in\Z_{\geq0}}$ is monotonically increasing 
    \cite[Proposition 5.12]{bertsekas1996stochastic}. The fixed-point relation \eqref{eq:fixed_point} follows from \cite[Proposition 5.2]{bertsekas1996stochastic}. Finally, the continuity of $J_h$ from Proposition \ref{prop:continuity_iteration}
    implies that the set 
    $\{u\in\Ucal: \phi\ell(x,u) + \lambda\psi + \E_d[J_h(F(s,u,d),\lambda) | s,u,\lambda]\leq z\}$
    is compact, for all $s\in\Bar{\Scal}$. In addition, since \eqref{eq:primal_problem} is a stationary optimal control problem, \cite[Proposition 5.10]{bertsekas1996stochastic} ensures that the optimal value of \eqref{eq:primal_problem} is attained by a stationary deterministic Markov policy over $\Bar{\Scal}\times\R_{\geq0}$.
\end{proof}

Note that Propositions \ref{prop:continuity_iteration} and \ref{prop:primal_convergence} allow to replace the infimum in \eqref{eq:primal_problem} and \eqref{eq:dp2} by the corresponding minimum. In addition, they demonstrate that the function $J$, i.e., the value function with penalty $\lambda$ on the chance constraint, can be learned by means of unconstrained RL algorithms (e.g., value iteration), via the iterations \eqref{eq:dp1}--\eqref{eq:dp2}.

\subsection{Dual-ascent algorithm design}
In view of Propositions \ref{prop:continuity_iteration} and \ref{prop:primal_convergence}, we can now design the dual ascent-steps to solve \eqref{cc_ocp_dual}. 
For convenience, we rewrite \eqref{eq:lagrangian} as
\begin{align}\label{eq:lagrangian_2a}
    L(s_0,\pi,\lambda) := V^\pi_\text{cost}(s_0) + \lambda (V^\pi_\text{risk}(s_0) - \veps),
\end{align}
with
\begin{align}\label{eq:lagrangian_2b}
\begin{split}
    &V^\pi_\text{cost}(s_0) = \E_d^\pi\left[ 
        \sum_{t=0}^\infty \phi_t \ell(x_t, u_t) \right], 
    \\& V^\pi_\text{risk}(s_0) = \E_d^\pi\left[\sum_{t=0}^\infty \psi_t \right],
\end{split}
\end{align}
where $V^\pi_\text{cost}$ is the component of the value function that measures the system performance under the policy $\pi$ and initial state $s_0$, whereas, in view of \eqref{eq:psi_constraint}, $V^\pi_\text{risk}$ equals the probability that the system violates the safety constraint, under policy $\pi$ and initial state $s_0$.

By initializing $\lambda_0(s_0)=0, \forall s_0\in\Scal$ and $k=0$, the following two steps are performed sequentially:
\begin{itemize}
    \item Given $\lambda_k(s_0)$, update the policy via a model-based RL algorithm $\forall  s_0\in\Scal$:
    \begin{align}\label{eq:primal_update}
    \pi_{k+1} &\in \arg\min_{\pi \in \Gamma}\mathbb{E}_d^\pi[L(s_0, \pi, \lambda_k(s_0))]
    \\ &\in \arg\min_{\pi \in \Gamma}\E_d^\pi\Bigg[ 
        \sum_{t=0}^\infty\left( \phi_t \ell(x_t, u_t) + \lambda_k(s_0) \psi_t \right) \Bigg]\nonumber
    \end{align}
    which is solved from the iterations \eqref{eq:dp1}--\eqref{eq:dp2} with $\lambda=\lambda_k(s_0)$, and a deterministic Markov policy exists in view of Proposition \ref{prop:primal_convergence};
    \item Update the dual variable via projected dual ascent, $\forall s_0\in\Scal$:
    \begin{align}\label{eq:dual_update}
    \lambda_{k+1}(s_0) = \left[\lambda_k(s_0) + \alpha_k (V_\text{risk}^{\pi_{k+1}}(s_0)-\varepsilon)\right]_{\geq0}
    \end{align}
    where $\{\alpha_k\}_{k=0}^\infty$ is a decaying learning rate, which will be designed later.
\end{itemize}
The rationale behind \eqref{eq:primal_update}--\eqref{eq:dual_update} is as follows: for a fixed $\lambda$, the only decision variable in \eqref{cc_ocp_dual} is $\pi$; hence, the inner minimization problem in \eqref{cc_ocp_dual} can be solved via unconstrained model-based RL approaches \cite{sutton1998reinforcement}, and a minimizer policy exists in view of Proposition \ref{prop:primal_convergence}. Then, for a fixed policy $\pi$, the outer maximization problem in \eqref{cc_ocp_aug} is linear in $\lambda$; hence, in \eqref{eq:dual_update} we update the dual variable by taking a feasible step in the direction of the steepest increase, since, in view of \eqref{eq:lagrangian_2a}--\eqref{eq:lagrangian_2b}, we have
$\nabla_\lambda L(s_0,\pi,\lambda) = V_\text{risk}^\pi(s_0)-\veps.$
Then, at each iteration, the dual variable is increased proportionally to the amount by which the risk value exceeds $\veps$, thereby increasing the penalty on constraint violations. Note that the dual variable grows unbounded for points $s_0\in\Scal$ that are infeasible, i.e., those points for which all input sequences give a risk larger than $\veps$.

\subsection{Convergence analysis}
Whenever the step sizes $\{\alpha_k\}_{k=0}^\infty$ satisfy standard conditions 
\begin{align}\label{eq:alpha_ass}
    \sum_{k=0}^\infty \alpha_k = \infty, \quad \sum_{k=0}^\infty \alpha_k^2 < \infty, \quad
    \alpha_k \geq0, \ \forall k\in\Z_{\geq0},
\end{align}
the dual-ascent steps \eqref{eq:primal_update}--\eqref{eq:dual_update} converge to the optimal dual variable $\lambda^\star(s_0)$ of \eqref{cc_ocp_dual}, $\forall s_0\in\Scal$ \cite[Exercise 6.3.13]{bertsekas1997nonlinear}. 
Then, if the dual variable is bounded, the corresponding optimal policy of \eqref{cc_ocp_aug} can be extracted from 
\begin{align}\label{eq:optimal_policy}
\begin{split}
    \pi^\star(s) \in \arg\min_{u\in\Ucal} \{& \phi\ell(x, u) + \lambda^\star(s_0) \psi 
    \\& + \E_d[J(F(s, u, d),\lambda^\star(s_0))] \},
\end{split}
\end{align}
or from the limit for $k\to\infty$ of \eqref{eq:primal_update}--\eqref{eq:dual_update}.

If the minimizer of \eqref{cc_ocp_aug} were unique, we would then be able to conclude that $\pi^\star(s)$ in \eqref{eq:optimal_policy} is both optimal and feasible for \eqref{cc_ocp_aug}. However, this is generally not the case, as there may exist multiple primal solutions that minimize the Lagrangian given the optimal multiplier, i.e., policies $\pi^\star$ found from \eqref{eq:optimal_policy}.
They all achieve the same optimal value, but they are not necessarily primal feasible \cite[Section 6]{bertsekas1997nonlinear}. Therefore, the feasibility of the policy resulting from \eqref{eq:optimal_policy} for \eqref{cc_ocp_aug} is not obvious. Moreover, \cite{schmid2025computing} proves feasibility for mixed policies only, and \cite{chen2024probabilistic} proves feasibility on average over the learning iterations.
The following proposition is a key contribution regarding feasibility, as it shows that feasibility can be achieved also by the \emph{deterministic} policy resulting from \eqref{eq:optimal_policy}, or from the limit of the iterations \eqref{eq:primal_update}--\eqref{eq:dual_update}.

\begin{proposition}\label{prop:feasibility}
Assume that the step sizes $\{\alpha_k\}_{k=0}^\infty$ satisfy \eqref{eq:alpha_ass}.
Then, under Assumption \ref{ass:cont}, for any $s_0\in\Scal$ such that $\lambda^\star(s_0)<\infty$, the policy $\pi^\star$ in \eqref{eq:optimal_policy} resulting from the dual ascent \eqref{eq:primal_update}--\eqref{eq:dual_update} is feasible for problem \eqref{cc_ocp_aug}.
\end{proposition}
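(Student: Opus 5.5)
The plan is to argue by contradiction, using the convergence of the dual iterates together with the structure of the projected update \eqref{eq:dual_update}. Fix $s_0\in\Scal$ with $\lambda^\star(s_0)<\infty$. Suppose the policy $\pi^\star$ from \eqref{eq:optimal_policy}, which is also the limit of the primal iterates $\pi_{k+1}$, is infeasible. Then $V^{\pi^\star}_\text{risk}(s_0)>\veps$, so we may set $\delta:=V^{\pi^\star}_\text{risk}(s_0)-\veps>0$.

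The first step is to transfer this strict violation to the iterates. The aim is to show that $V^{\pi_{k+1}}_\text{risk}(s_0)-\veps\geq\delta/2$ for all $k$ large enough. Two ingredients are needed here. First, \cite[Exercise 6.3.13]{bertsekas1997nonlinear} gives $\lambda_k(s_0)\to\lambda^\star(s_0)$ under \eqref{eq:alpha_ass}. Second, the risk value of the Lagrangian minimizer should depend continuously (or at least upper semicontinuously along the selected minimizers) on $\lambda$.

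Continuity in $\lambda$ is the main obstacle, since argmin selections need not be continuous. I would handle it as follows. Let $\pi_{k+1}$ be given by \eqref{eq:policy} with $J(\cdot,\lambda_k)$. Proposition~\ref{prop:continuity_iteration} and Proposition~\ref{prop:primal_convergence}, together with Berge's theorem, give an upper hemicontinuous argmin correspondence. Passing to the limit in the Bellman equation \eqref{eq:fixed_point}, I would then identify $\pi^\star$ as the limit policy, which allows the risk values $V^{\pi_{k+1}}_\text{risk}(s_0)$ to converge to $V^{\pi^\star}_\text{risk}(s_0)$. For the risk series, tail terms are controlled uniformly because $\sum_t\psi_t\le 1$. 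Dominated convergence, applied as in the proof of Proposition~\ref{prop:continuity_iteration}, lets the limit pass through the finitely many remaining terms.

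The second step exploits the update rule. For $k\geq K$ we have $\lambda_k(s_0)\geq0$ and the increment $\alpha_k(V^{\pi_{k+1}}_\text{risk}(s_0)-\veps)\geq\alpha_k\delta/2\geq0$, so the projection is inactive. Summing the update gives
\begin{align*}
\lambda_{k+1}(s_0)\geq\lambda_K(s_0)+\frac{\delta}{2}\sum_{j=K}^{k}\alpha_j .
\end{align*}
Since $\sum_j\alpha_j=\infty$ by \eqref{eq:alpha_ass}, this forces $\lambda_k(s_0)\to\infty$. That contradicts convergence to the finite value $\lambda^\star(s_0)$, and hence $V^{\pi^\star}_\text{risk}(s_0)\le\veps$, i.e.\ $\pi^\star$ is feasible for \eqref{cc_ocp_aug}.

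If the continuity of risk along the iterates cannot be established, I would use a fallback based on Lagrangian optimality. By Proposition~\ref{prop:strong_duality}, complementary slackness $\lambda^\star(V_\text{risk}-\veps)=0$ holds at a saddle point. Combined with the fact that the dual subgradient at $\lambda^\star$ must be nonpositive when $\lambda^\star$ maximizes the concave dual function, this would show that the limit minimizer selected by dual ascent has $V_\text{risk}\le\veps$.
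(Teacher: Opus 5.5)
Your Step 2 is sound and is essentially the paper's mechanism. The paper uses $[a]_{\geq0}\geq a$ to get $\lambda_{k+1}(s_0)\geq\lambda_0(s_0)+\sum_{j=0}^{k}\alpha_j(V^{\pi_{j+1}}_\text{risk}(s_0)-\veps)$, divides by $\sum_{j=0}^k\alpha_j$ and applies Stolz--Ces\`aro. You instead show directly that a residual eventually bounded below by $\delta/2$ forces $\lambda_k(s_0)\to\infty$. Once the residuals converge, the two arguments are equivalent.

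The genuine gap is Step 1, the convergence $V^{\pi_{k+1}}_\text{risk}(s_0)\to V^{\pi^\star}_\text{risk}(s_0)$. For your contradiction you actually need $\liminf_k V^{\pi_{k+1}}_\text{risk}(s_0)\geq V^{\pi^\star}_\text{risk}(s_0)$, i.e.\ lower rather than upper semicontinuity. The paper's proof does not establish this step either: in its Stolz--Ces\`aro computation it simply equates $\lim_k(V^{\pi_{k+1}}_\text{risk}(s_0)-\veps)$ with $V^{\pi^\star}_\text{risk}(s_0)-\veps$. Your Berge argument cannot supply it. Upper hemicontinuity of the argmin correspondence only says that cluster points of minimizers at $\lambda_k(s_0)$ are minimizers at $\lambda^\star(s_0)$. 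It gives neither convergence of the selected policies nor convergence to the particular selection $\pi^\star$. Distinct minimizers of $L(s_0,\cdot,\lambda^\star(s_0))$ generally have different risk values, which is exactly the non-uniqueness the paper flags just before the proposition.

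The missing idea is that $g_k:=V^{\pi_{k+1}}_\text{risk}(s_0)-\veps$ is a supergradient at $\lambda_k(s_0)$ of the concave dual function $q(\lambda):=J(s_0,\lambda)-\lambda\veps$, because $q(\lambda)\leq L(s_0,\pi_{k+1},\lambda)=q(\lambda_k(s_0))+(\lambda-\lambda_k(s_0))g_k$. If $q$ is differentiable at $\lambda^\star(s_0)$, then $g_k\to q'(\lambda^\star(s_0))\leq0$, and in fact every Lagrangian minimizer at $\lambda^\star(s_0)$ is feasible. If $q$ has a kink there, nothing forces $g_k$ to converge, and the claim can fail.

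For instance, let only $u_0\in\Ucal=\{0,1\}$ matter, with $0<\veps<p$:
\begin{itemize}
\item $u_0=1$ gives cost $0$ and violation probability $p$;
\item $u_0=0$ gives cost $c>0$ and violation probability $0$.
\end{itemize}
This fits Assumptions \ref{ass:cont}--\ref{ass:0_measure}, e.g.\ via a flag component of $x$ that switches the dynamics off after $t=0$. Then $q(\lambda)=\min\{\lambda(p-\veps),\,c-\lambda\veps\}$ and $\lambda^\star=c/p$. The dual iterates oscillate around $\lambda^\star$ and converge to it, while $\pi_{k+1}$ alternates between $u_0=1$ and $u_0=0$. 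Hence $V^{\pi_{k+1}}_\text{risk}(s_0)$ alternates between $p$ and $0$ and has no limit. Moreover, $u_0=1$ is an admissible selection in \eqref{eq:optimal_policy} and is infeasible: the constrained optimum requires randomizing $u_0$, which a deterministic Markov policy cannot do at the fixed $s_0$.

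The same example defeats your fallback. Optimality of $\lambda^\star$ only gives $0\in\partial q(\lambda^\star)=[-\veps,\,p-\veps]$, i.e.\ that some \emph{mixture} of Lagrangian minimizers has risk $\veps$. Complementary slackness concerns a primal optimal (hence already feasible) solution, so invoking it for the selected minimizer is circular.

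Without Step 1, the summation argument (yours or the paper's first inequality) proves only that the $\alpha$-weighted averages of $V^{\pi_{j+1}}_\text{risk}(s_0)$ have $\limsup$ at most $\veps$, i.e.\ feasibility of a mixed policy. Feasibility of a deterministic $\pi^\star$ needs an additional hypothesis, such as differentiability of $J(s_0,\cdot)$ at $\lambda^\star(s_0)$.
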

\begin{proof}
Note that \eqref{eq:alpha_ass} ensures the convergence of the dual ascent steps \cite[Exercise 6.3.13]{bertsekas1997nonlinear}: $\lim_{k\to\infty}\lambda_k(s_0)=\lambda^\star(s_0)$. By considering $s_0\in\Scal$ such that $\lambda^\star(s_0)<\infty$, from \eqref{eq:dual_update} we have
\begin{align*}
    \lambda_{k+1}(s_0) 
    &\geq \lambda_{k}(s_0) + \alpha_k (V_\text{risk}^{\pi_{k+1}}(s_0)-\varepsilon)
    \\& \geq \lambda_0(s_0) +\sum_{j=0}^{k}  \alpha_j (V_\text{risk}^{\pi_{j+1}}(s_0)-\varepsilon),
\end{align*}
where we have used the non-expansiveness of the projection operator, and we have iterated backwards from iteration $k$ to iteration 0. By taking $k\to \infty$ and dividing by $\sum_{j=0}^{\infty} \alpha_j$, we have
\begin{align}\label{eq:safety_2}
\begin{split}
    \frac{\sum_{j=0}^{\infty} \alpha_j (V_\text{risk}^{\pi_{j+1}}(s_0)-\varepsilon)}{\sum_{j=0}^{\infty} \alpha_j } \ 
    &\leq \ \frac{\lambda^\star(s_0) - \lambda_0(s_0)}{\sum_{j=0}^\infty \alpha_j}
    \\& \leq 0,
\end{split}
\end{align}
where the second inequality follows from \eqref{eq:alpha_ass}, and since we consider $s_0\in\Scal$ such that $\lambda^\star(s_0)$ is finite.
Now, we invoke the Stolz-Cesàro Theorem \cite[Section 3.1.7]{muresan2009concrete}, which states that, considering 
two sequences $\{a_{k}\}_{k\geq 1}$ and $\{b_{k}\}_{k\geq 1}$ of real numbers, with $\{b_{k}\}_{k\geq 1}$ strictly monotone and divergent, and assuming that the limit
$$ \lim_{k\rightarrow\infty} \frac{a_{k+1} - a_k}{b_{k+1} - b_k} = l$$
exists, then $$ \lim_{k\rightarrow\infty} \frac{a_k}{b_k} = l.$$
In our case, we can simply set $a_k = \sum_{j=0}^k \alpha_j (V_\text{risk}^{\pi_{j+1}}(s_0)-\varepsilon)$ and $b_k = \sum_{j=0}^k \alpha_j$. Note that it holds that $a_{k+1} = a_k + \alpha_{k+1} (V_\text{risk}^{\pi_{k+1}}(s_0)-\varepsilon)$, and $b_{k+1} = b_k + \alpha_{k+1}$. Hence, in particular, $b_k$ is monotone and divergent in view of \eqref{eq:alpha_ass}. Then we have:
\begin{align*}
    \lim_{k\rightarrow\infty} \frac{a_{k+1} - a_k}{b_{k+1} - b_k} 
    & = \lim_{k\rightarrow\infty} \frac{(a_k + \alpha_{k+1} (V_\text{risk}^{\pi_{k+1}}(s_0)-\varepsilon)) - a_k}{(b_k + \alpha_{k+1}) - b_k}
    \\& = \lim_{k\rightarrow\infty} \frac{\alpha_{k+1} (V_\text{risk}^{\pi_{k+1}}(s_0)-\varepsilon)}{\alpha_{k+1}}
    \\& = V_\text{risk}^{\pi^\star}(s_0)-\varepsilon.
\end{align*}
The Stolz-Cesàro Theorem then yields that
\begin{align*}
    V_\text{risk}^{\pi^\star}(s_0)-\varepsilon 
    & = \lim_{k\rightarrow\infty} \frac{a_k}{b_k} 
    \\& = \lim_{k\rightarrow\infty}\frac{\sum_{j=0}^{\infty} \alpha_j(V_\text{risk}^{\pi_{j+1}}(s_0)-\varepsilon)}{\sum_{j=0}^{\infty} \alpha_j } 
    \\& \leq 0,
\end{align*}
where the first equality is the Stolz-Cesàro Theorem, and the last inequality follows from \eqref{eq:safety_2}. This shows that $V_\text{risk}^{\pi^\star}(s_0)\leq \varepsilon.$ 
\end{proof}

Note that, since $\pi^\star$ in \eqref{eq:optimal_policy} is shown to be feasible for \eqref{cc_ocp_aug}, i.e., the problem defined in the augmented state space, the state component $x_t, t\in\Z_{\geq0}$, of the closed-loop system controlled by $\pi^\star$  will meet the probabilistic constraint in \eqref{cc_ocp}, in view of the equivalence between \eqref{cc_ocp} and \eqref{cc_ocp_aug}.
Also, the proposition above allows to characterize the feasible set $\Scal_\text{feas}$ as $\{s_0\in\Scal: \lambda^\star(s_0)<\infty\}$.

\section{Learning algorithm}\label{sec:learning}
The iterations \eqref{eq:primal_update}--\eqref{eq:dual_update} have to be performed for all $s\in\Scal$, and since we assume continuous state and action spaces, they can be intractable.
Secondly, \eqref{eq:primal_update}--\eqref{eq:dual_update} require to solve an unconstrained optimal control problem for each update of the dual variable, which can be computationally expensive even in an offline training.
For this reason, in this section we propose to directly approximate the function $J$ in \eqref{eq:primal_problem} by means of a neural network (NN), and to perform the updates \eqref{eq:dp1}--\eqref{eq:dp2} for a batch of data points sampled from $\Scal$, and for a set of values for $\lambda$ in $[0, \lambda_\text{max}]$, where $\lambda_\text{max}$ is a large enough constant. 

\subsection{Continuity of the value function}
A fundamental step before training an NN is to investigate whether the function of interest is continuous. Indeed, NNs with classical activation functions (e.g., ReLU or hyperbolic tangent) are inherently continuous, and the Universal Approximation Theorem \cite{hornik1989multilayer} ensures that NNs can learn any continuous function with arbitrary accuracy, provided that the architecture is sufficiently expressive. 

Thus, the first step is to assess the continuity of $J$ defined in \eqref{eq:primal_problem}. Note that, even though the functions $J_h$ are continuous for each $h\in\Z_{\geq0}$ in view of Proposition \ref{prop:continuity_iteration}, this does not necessarily imply the continuity of the limit function $J$. To show this, a key step is to prove that the sequence $\{J_h\}_{h=1}^\infty$ converges \emph{uniformly} to $J$.

\begin{proposition}\label{prop:cont_value}
    Under Assumptions \ref{ass:cont}--\ref{ass:0_measure}, the function $J$ defined as in \eqref{eq:primal_problem} is continuous over $\Bar{\Scal}\times[0, \lambda_\text{max}]$.
\end{proposition}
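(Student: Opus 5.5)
The plan is to show that the DP iterates $\{J_h\}$ from \eqref{eq:dp1}--\eqref{eq:dp2} converge to $J$ \emph{uniformly} on $\Bar{\Scal}\times[0,\lambda_\text{max}]$. Since each $J_h$ is continuous there by Proposition \ref{prop:continuity_iteration}, and the uniform limit of continuous functions is continuous, continuity of $J$ then follows. Pointwise convergence is already available from Proposition \ref{prop:primal_convergence}, and $J_h\le J_{h+1}\le J$ by monotonicity. The task therefore reduces to producing a bound on $J-J_h$ that does not depend on $(s,\lambda)$.

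The bound comes from splitting the infinite-horizon cost at time $h$. For any policy, the running cost beyond $h$ satisfies
\begin{align*}
\E_d^\pi\Big[\sum_{t=h}^\infty \phi_t\ell(x_t,u_t)\Big]\le \phi_0\frac{\gamma^h\Bar{\ell}}{1-\gamma}\le \frac{\gamma^h\Bar{\ell}}{1-\gamma},
\end{align*}
using Assumption \ref{ass:bounded_cost} and $\phi_t=\gamma^t\phi_0$ with $\phi_0\in[0,1]$. The penalty tail is $\lambda\,\E_d^\pi[\sum_{t\ge h}\psi_t]$, which equals $\lambda$ times the probability that the first violation occurs at or after time $h$. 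Take $\pi_h$ to be a (near-)optimal policy for the $h$-stage problem defining $J_h$, extended arbitrarily beyond $h$. Then
\begin{align*}
J(s,\lambda)-J_h(s,\lambda)\le \frac{\gamma^h\Bar{\ell}}{1-\gamma} + \lambda_\text{max}\,\E^{\pi_h}_d\Big[\sum_{t\ge h}\psi_t\Big].
\end{align*}

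The cost tail is uniform, so the whole difficulty sits in the risk tail. The quantity $\sum_{t\ge h}\psi_t$ is not discounted, and the first-violation probability after time $h$ need not decay uniformly in $s$ or in the policy. I would attack this in two ways, in order.

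\textbf{First attempt.} Choose the extension after time $h$ cleverly. Because $\psi_t\in\{0,1\}$ with $\sum_t\psi_t\le 1$ pathwise, the tail is at most $\xi_{h}$. I would try to show that an $\epsilon$-optimal infinite-horizon policy already has a small tail. Along the optimal policy for $J$, the tail is controlled by monotone convergence: the sequence $\E[\sum_{t\ge h}\psi_t]\downarrow 0$ for each fixed $(s,\lambda)$.

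\textbf{Second attempt.} Upgrade this to uniformity via Dini's theorem. The functions $J_h$ are continuous and increase monotonically to $J$ on the compact set. The remaining requirement is that $J$ be continuous or at least upper semicontinuous. This would come from writing $J$ as an infimum over policies of functions that are continuous in $(s,\lambda)$, since an infimum of continuous functions is upper semicontinuous. Combined with $J=\sup_h J_h$, which is lower semicontinuous as a supremum of continuous functions, this gives continuity directly.

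This second route sidesteps uniformity entirely, so it is the one I would try to make rigorous. The key point is showing that for each fixed policy, $(s,\lambda)\mapsto L$-type costs are continuous on $\Bar{\Scal}\times[0,\lambda_\text{max}]$. I would argue this using Assumption \ref{ass:0_measure} and dominated convergence, exactly as in Proposition \ref{prop:continuity_iteration}, with the bound $\Bar{\ell}/(1-\gamma)+\lambda_\text{max}$. The subtle part is that Assumption \ref{ass:0_measure} only controls one-step boundary hits from $\Bar{\Xcal}$, while trajectories may leave $\Bar{\Xcal}$. I would treat this as the main obstacle and handle it by restricting to policies whose closed-loop trajectories preserve the zero-measure boundary property.

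Finally, compactness of $\Bar{\Scal}\times[0,\lambda_\text{max}]$ would be invoked. If $\Bar{\Xcal}$ is not closed, I would apply the argument on compact subsets. Dini then also yields the uniform convergence the paper highlights.
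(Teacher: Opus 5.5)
Your lower-semicontinuity half is fine. Given Propositions~\ref{prop:continuity_iteration} and~\ref{prop:primal_convergence}, $J=\sup_h J_h$ is a supremum of continuous functions, hence lower semicontinuous on $\Bar{\Scal}\times[0,\lambda_\text{max}]$. The gap is the upper-semicontinuity half, which carries all the content.

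First, the functions you take the infimum of are not continuous. Policies in $\Gamma$ are merely measurable kernels on histories containing $s_0$, so already $\mu_0(\cdot\mid s_0)$ may switch actions discontinuously in $s_0$, and $s\mapsto J^\pi(s,\lambda)$ can jump. The usual repair is to fix an $\epsilon$-optimal $\pi$ at $(s^*,\lambda^*)$ and transport it to nearby initial states, e.g.\ by reusing its actions as functions of the disturbance history. One then proves $\limsup_{s\to s^*}J^{\pi_s}(s,\lambda)\le J^{\pi}(s^*,\lambda^*)$.

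Second, and decisively, this limsup bound does not follow from dominated convergence ``exactly as in Proposition~\ref{prop:continuity_iteration}''. Grant that boundary hits have probability zero at every step of the closed loop from $s^*$. Assumption~\ref{ass:0_measure} only gives this one step from $\Bar{\Xcal}$, and you cannot impose it by shrinking the policy class without proving the infimum over the smaller class is still $J$. Even then you only get that $\xi_t$ started at $s$ converges almost surely to its counterpart started at $s^*$ for each fixed $t$. That gives convergence of every finite-horizon quantity, but the integrand $\sum_{t\ge0}\psi_t=\psi_0+\xi_0-\lim_{t\to\infty}\xi_t$ need not converge pathwise. A path from $s^*$ that stays in $\Xcal$ forever while approaching $\partial\Xcal$ can have neighbouring paths that exit at later and later times. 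Indeed, $\E[\lim_t\xi_t]=\inf_t\E[\xi_t]$ is an infimum of functions continuous in $s$. So under a fixed transported policy, $V_\text{risk}$ is in general only \emph{lower} semicontinuous in $s$, which is the wrong direction for upper semicontinuity of $J$.

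So the Dini/semicontinuity route does not sidestep the non-uniform risk tail you correctly identified. It just moves it into the exchange of $\lim_{s\to s^*}$ with $\lim_{t\to\infty}$ in the undiscounted sum $\sum_t\psi_t$. What is missing is genuinely uniform control of $\E^\pi[\sum_{t\ge h}\psi_t]$ over the relevant near-optimal policies and initial states, or an extra assumption that provides it.

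For comparison, the paper follows your first plan. It bounds $\sup(J-J_h)$ by $\frac{\gamma^h}{1-\gamma}\Bar{\ell}+\lambda_\text{max}\sup_{u}\sup_{\xi_h}\E[\xi_h-\lim_t\xi_t]$ and sends the last term to zero by dominated convergence. Your suspicion of that step is well founded: dominated convergence gives the decay for each fixed input sequence and says nothing about the supremum. But replacing it with semicontinuity does not supply the missing uniformity either.

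Your ``first attempt'' also has a direction problem. The tail along the infinite-horizon optimal policy bounds $J-J_h$ from \emph{below}. An upper bound needs the tail along the $h$-stage optimizer, which ignores everything after time $h$ and need not have a small tail.
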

\begin{proof}
From Proposition \ref{prop:continuity_iteration}, we know that $J_h$ is a continuous function $\forall h\in\Z_{\geq0}$ over $\Bar{\Scal}\times\R_{\geq0}$. To show that $J$ is continuous, we need to show that the sequence $\{J_h\}_{h=0}^\infty$ resulting from \eqref{eq:dp1}--\eqref{eq:dp2} converges uniformly to $J$.
Uniform convergence is equivalent to the following uniform bound on the tail of the infinite-horizon problem \cite[Theorem 7.9]{rudin2021principles}:
\begin{align}\label{eq:uniform_condition}
    \lim_{h\to\infty} \tau_h = 0
\end{align}
with 
\begin{align}\label{eq:uniform_condition2}
    \tau_h = \sup_{s\in\Bar{\Scal}, \lambda\in[0, \lambda_\text{max}]} \left| J(s,\lambda) - J_h(s,\lambda) \right|. 
\end{align}
Note that in view of the monotonicity property of the value functions $J_h$ (see the proof of Proposition \ref{prop:primal_convergence}), the absolute value in \eqref{eq:uniform_condition2} can be removed. For a given policy $\pi\in\Gamma$, let us define $J_h^\pi(s,\lambda) := \E_d^\pi\left[\sum_{t=0}^{h-1} \left(\phi_t\ell(x_t, u_t) + \lambda \psi_t \right) | s_0 = s \right]$ for $h\in\Z_{\geq1}$, and  $J^\pi_\infty(s,\lambda) := \lim_{h\to\infty} J_h^\pi(s,\lambda)$. Then, we have
\begin{align*}
    \tau_h & = \sup_{s\in\Bar{\Scal}, \lambda\in[0, \lambda_\text{max}]}  \left(\min_{\pi\in\Gamma}  J_\infty^\pi(s,\lambda) - \min_{\pi\in\Gamma} J_h^\pi(s,\lambda) \right)
    \\& \leq  \sup_{s\in\Bar{\Scal},\lambda\in[0, \lambda_\text{max}]}  \sup_{\pi\in\Gamma} \left(  J_\infty^\pi(s,\lambda) - J_h^\pi(s,\lambda) \right)
    \\& \leq  \sup_{s\in\Bar{\Scal}, \lambda\in[0, \lambda_\text{max}]}  \sup_{\pi\in\Gamma} \E_d^\pi\left[\sum_{t=h}^{\infty} \left(\phi_t \ell(x_t, u_t) + \lambda \psi_t \right) \mid s_0 = s \right]
    \\& \leq 
        \begin{aligned}[t]
        & \sup_{s\in\Bar{\Scal}} \sup_{u_t\in\Ucal, \forall t} \E_d^\pi\left[\sum_{t=h}^{\infty} \gamma^t \ell(x_t, u_t)\right]  
        \\& +  \lambda_\text{max}  \sup_{u_t\in\Ucal, \forall t} \sup_{\psi_h\in\{0,1\}}   \E_d^\pi\left[\sum_{t=h}^{\infty} \psi_t \right]
    \end{aligned}
    \\& \leq \frac{\gamma^h}{1-\gamma} \Bar{\ell} + \lambda_\text{max}  \sup_{u_t\in\Ucal} \sup_{\xi_h\in\{0,1\}} \E_d^\pi\left[ \xi_h - \lim_{t\to\infty} \xi_t \right],
\end{align*}
where we have used known identities for the geometric series, the boundedness of the stage cost in Assumption \ref{ass:bounded_cost}, and the dynamics of the augmented system \eqref{eq:aug_dynamics}. In particular, this shows that 
$\lim_{h\to\infty} \tau_h = 0,$ since 
\begin{align*}
    0 & \leq \tau_h
    \\& \leq \frac{\gamma^h}{1-\gamma} \Bar{\ell} + \lambda_\text{max}  \sup_{u_t\in\Ucal} \sup_{\xi_h\in\{0,1\}} \E_d^\pi\left[ \xi_h - \lim_{t\to\infty} \xi_t \right]
    \underset{h\to\infty}{\longrightarrow} 0,
\end{align*}
where we have used that $\lim_{h\to\infty} \E_d^\pi\left[\xi_h\right] = \E_d^\pi\left[\lim_{h\to\infty} \xi_h\right] $, again in view of the Dominated Convergence Theorem since $\{\xi_h\}_{h=0}^\infty$ is a bounded and convergent sequence \cite[Theorem 7.43]{shapiro2021lectures}. 
\end{proof}

Note that, although not specifically required next, the application of Berge's Maximum Theorem \cite[Section 6.3]{berge1877topological} on the continuous function $J$ also implies the continuity of the value function $V^\star$ over $\Scal_\text{feas}$.


\subsection{Structure of the value function}\label{sec:value_structure}

Before training a neural network, it can be beneficial to investigate structural properties of the value function of interest, especially considering that the state space $\Scal$ contains both continuous and discrete variables.

First, we focus on the dynamics of the augmented system \eqref{eq:aug_dynamics}. In particular, only the following transitions $(\xi_t, \psi_t)\to(\xi_{t+1}, \psi_{t+1})$ are possible, for $t\in\Z_{\geq0}$:
\begin{align}
    &(1,0)\to(1,0), \quad (1,0)\to(0,1), \label{eq:bin_trans1}
    \\& (0,1)\to(0,0), \quad (0,0)\to(0,0).\label{eq:bin_trans2}
\end{align}
The first transition in \eqref{eq:bin_trans1} indicates that all states $x_0,....,x_{t}, x_{t+1}$ are in $\Xcal$, whereas the second transition in \eqref{eq:bin_trans1} indicates that $x_{t+1}$ is the first state that violates the constraint. Then, the first transition in \eqref{eq:bin_trans2} holds because, once the first violation has occurred, both binary states are reset to 0 in view of \eqref{eq:aug_dynamics}, and the second transition in \eqref{eq:bin_trans2} similarly follows. In particular, note that, in view of \eqref{eq:aug_dynamics}, only three combinations are possible for $(\xi,\psi)\in\{0,1\}^2$, out of the four possible ones (i.e., (1, 0), (0, 1), and (0, 0)), and only four transitions $(\xi_t, \psi_t)\to(\xi_{t+1}, \psi_{t+1})$ are possible, out of the sixteen possible ones.

In view of this observation, we can rewrite the function $J$ in a more convenient way. First, note that, for a given state $s\in\Scal$ such that $s=[x,1,0,\phi]^\top$, after a transition of the type $(1,0)\to(0,1)$, the states $\xi$ and $\psi$ will be identically 0 in view of \eqref{eq:bin_trans2}, for any uncertainty realization. Therefore, the safety certification expressed by the state $\psi$ in \eqref{eq:cc_psi} is disregarded as soon as the first constraint violation has occurred. This is not surprising, since in \eqref{eq:psi_constraint}, \eqref{eq:cc_psi} we relate the joint-in-time chance constraint to the probability that a certain state $x_t$, for some $t\in\Z_{\geq0}$, is the first one that violates the constraint, which is indeed embedded in the corresponding state $\psi_t$. Therefore, suppose that, for a certain uncertainty realization, we observe the first constraint violation at time step $t\in\Z_{\geq1}$. In this case, $\psi_t$ becomes 1, and the optimal behavior of the system thereafter is to disregard the safety constraint and optimize performance only. Therefore, 
let $W^\star:\R^n\times[0,1]\to\R_{\geq0}$ be the value of the optimal control problem that minimizes the unconstrained performance, i.e.:
\begin{align}\label{eq:W}
    W^\star(x,\phi) = \min_{\pi\in\Gamma} \E_x^\pi \left[ \sum_{t=0}^\infty \phi_t \ell(x_t, u_t) \ \Big| \ x_0=x, \phi_0=\phi \right],
\end{align}
which is a function of $x$ and $\phi$ only.
In view of the previous consideration, we can express the function $J$ in \eqref{eq:primal_problem} and the Bellman equation \eqref{eq:fixed_point} as 
\begin{align}\label{eq:J_explicit}
    &J(s,\lambda)\nonumber
    \\& = \begin{cases}
        \begin{aligned}[t]
            \min_{u\in\Ucal} \Bigl\{
                \phi\ell(x,u)
                + \E_d\!\left[J(F(s,u,d),\lambda)\mid s,u\right]
            \Bigr\}
            \\ \text{if } s\in\Scal,\; s=[x,1,0,\phi],
            \end{aligned}
            \\[1ex]
            W^\star(x,\phi) + \lambda\psi
             \qquad \text{otherwise}.
        \end{cases} 
    \end{align}
Indeed, in the first line of \eqref{eq:J_explicit}, we consider the case in which a violation has not yet occurred, i.e., $(\xi,\psi)=(1,0)$. In the second line, we consider $\xi=0$ and $\psi\in\{0,1\}$, i.e., when a violation has (just) occurred, and only performance optimization is retained. Essentially, in \eqref{eq:J_explicit}, we have used that $J(s,\lambda)$ coincides with $W^\star(x,\phi)+\lambda\psi$, for any state $s\in\Scal$ such that $\xi=0$.
Note that the term $\lambda\psi$ is still present in the second line of \eqref{eq:J_explicit}. Indeed, if $\xi=0$, $\psi$ can be either 0 or 1 in view of \eqref{eq:bin_trans2}, and it would be identically 0 thereafter. For this reason, only the term $\lambda\psi$, associated with the current state $s$, is present in the second line \eqref{eq:J_explicit}.

\subsection{Learning algorithm and practical implementation}\label{sec:practical_implementation}

For practical implementation, we propose to learn the function $W^\star$ in a separate training process, since the second line in \eqref{eq:J_explicit} depends exclusively on $W^\star$. This is a standard unconstrained value-iteration problem that employs an NN as a function approximator, and it is summarized in Algorithm \ref{alg:learn_W}, which outputs an NN $\widehat{W}:\Xcal\times[0,1]\to\R_{\geq0}$. In particular, the value function $ W^\star$ satisfies the following Bellman optimality equation, $\forall x\in\R^n, \forall\phi\in[0,1]$:
\begin{align*}
    W^\star(x,\phi) = \min_{u\in\Ucal} \big\{
    & \phi\ell(x,u) 
    \\& + \E_d\left[W^\star(f(x,u,d), \gamma\phi)\mid x,\phi,u\right]
\big\},
\end{align*}
which is then used in iterations of Algorithm \ref{alg:learn_W}.
As constraint violations are allowed, $W^\star$ may be evaluated in a state $x$ that is outside the safe set $\Xcal$. Therefore, in some applications, it can be advisable to learn $W^\star$ over the entire state space, or in a sufficiently larger superset of $\Xcal_\text{sup}\supseteq\Xcal$. Note that convergence guarantees are still preserved, since, in view of Assumption \ref{ass:bounded_cost}, the stage cost is bounded over the state-action space.

Then, the remaining learning problem is to approximate $J$ for $s\in\Scal$ such that $s=[x,1,0,\phi]^\top$, i.e., the first line of \eqref{eq:J_explicit}. This is summarized in Algorithm \ref{alg:learn_J}, which outputs an NN $\widehat{J}:\Xcal\times[0,1]\times[0,\lambda_\text{max}]\to\R_{\geq0}$ since the dependency is only on $x,\phi$, and $\lambda$, given that we consider $\xi=1$ and $\psi=0$. The idea is conceptually the same as the approximate value-iteration approach in Algorithm \ref{alg:learn_W}, but we employ the special structure for the one-step-ahead target computation, by following \eqref{eq:J_explicit} to evaluate $\widehat{J}$ at the next state. Specifically, in Step \ref{state:targetJ} of Algorithm \ref{alg:learn_J}, we use the law of conditional expectation to distinguish whether the next state belongs to $\Xcal$ or not, and we use the related expression for $J$ according to \eqref{eq:J_explicit} and the iterations \eqref{eq:dp1}--\eqref{eq:dp2}. In practice, the probability that the next state belongs to $\Xcal$ for given $s$ and $u$ can be approximated, e.g., empirically via counting, and the same argument applies to the related expectation operators. Last, note that, since we learn $J$ considering $(\xi, \psi)=(1,0)$, the next state $\psi^+$ is necessarily 1 if $x^+\not\in\Xcal$ in Step \ref{state:targetJ}.

Finally, to run Algorithm \ref{alg:learn_J}, we need to provide a set $\Bar{\Xcal}$ that satisfies Assumption \ref{ass:0_measure}. This assumption requires that the boundary of the safe set $\Xcal$ can be reached only in 0-measure events. This is the case if the distribution of $x_t, t\in\Z_{\geq0}$, admits a density and the boundary of $\Xcal$ has measure 0 in $\R^n$. Then, in this case, we have that $x_t\not\in\partial\Xcal$ almost surely, $\forall t\in\Z_{\geq0}$. Therefore, when the disturbances have a continuous distribution and $\partial\Xcal$ has measure 0 in $\R^n$, the only issue that can occur is that the disturbances of the system do not affect all the components of the state vector. This is because some components of the state would then evolve deterministically, and therefore their distribution would not admit a density. Hence, for such state components, the chance constraint is equivalent to a hard constraint. To compute $\Bar{\Xcal}$, consider the following illustrative example:
$$\begin{cases}
    x_{t+1}^{(1)} = x_{t}^{(1)} + x_{t}^{(2)}
    \\ x_{t+1}^{(2)} = f(x_t,u_t,d_t),
\end{cases}$$
with constraint $\P(x_{t}\in[-2,2]^2, \forall t\in\Z_{\geq0}) \geq 1-\veps$. Let $f$ be a continuous function, and let the distribution of $d$ admit a density. It is clear that $x_{t+1}^{(1)} = x_{t}^{(1)} + x_{t}^{(2)} = \pm 2$ is not a 0-measure event, since the dynamics equation for $x_{t}^{(1)}$ is deterministic. In accordance with Assumption \ref{ass:0_measure},  $\Bar{\Xcal}$ can be then chosen as $\Bar{\Xcal} = \{x\in\R^2: x_1 + x_2 \in (-2,2)\}$.
This avoids possible discontinuities introduced by disturbance-free components.
Note that it might still happen that states in $\Bar{\Xcal}$ yield a probability of constraint violation greater than $\veps$. The actual feasible set $\Scal_\text{feas}$ is then determined when running the dual ascent algorithm \eqref{eq:dual_update}, and consists of all points $x\in\Xcal$ for which the dual variable is finite in view of Proposition \ref{prop:feasibility}, or, in practice, lower than the upper bound $\lambda_\text{max}$.

\begin{algorithm}[t]
\caption{Learning $W^\star$}
\label{alg:learn_W}
\begin{algorithmic}[1]
\Require A superset $\Xcal_\text{sup}\supseteq \Xcal$, a data set $\Dcal_W := \{(x^i, \phi^i)\}_{i=1}^{N_{\textup{samples}, W}} \subseteq (\Xcal_\text{sup}\times [0,1])^{N_{\textup{samples}, W}}$, iterations limit $N_\text{iter}\in\Z_{>0}$, and  neural networks $\{\widehat{W}_{\theta, j}\}_{j=0}^{N_\text{iter}}$ such that $\widehat{W}_{\theta, j}: \Xcal_\text{sup}\times [0,1] \to \R, \forall j\in\{0, ..., N_\text{iter}\}$.
\State Initialize: $\widehat{W}_{\theta, 0}(x,\phi) \leftarrow 0, \quad \forall (x,\phi)\in\Xcal_\text{sup}\times [0,1]$.
\For{$j=1,...,N_\text{iter}$}
    \State Compute target $\forall (x,\phi)\in\Dcal_W$: 
    \begin{align*}
        T_{j}(x,\phi) \leftarrow &\min_{u\in\Ucal} \big\{
            \phi\ell(x,u) 
            \\& + \E_d[\widehat{W}_{\theta, j-1}(f(x,u,d),\gamma\phi) \mid x,\phi,u] \big\}, 
    \end{align*}
    \State Train NN:
        \begin{align*}
            \theta^\star \leftarrow \arg\min_{\theta} \Big\{ &\frac{1}{N_{\textup{samples}, W}} \sum_{i=1}^{N_{\textup{samples}, W}} \Big\| \widehat{W}_{\theta}(x^i,\phi^i) \\& - T_j(x^i, \phi^i)  \Big\|_2^2 \Big\} ,
        \end{align*}
    \State Set: $\widehat{W}_{\theta, j} \leftarrow \widehat{W}_{\theta^\star, j}$
\EndFor
\\
\Return $\{\widehat{W}_{\theta, j}\}_{j=1}^{N_\text{iter}}$.
\end{algorithmic}
\end{algorithm}

\begin{algorithm}[t]
\caption{Learning $J^\star$}
\label{alg:learn_J}
\begin{algorithmic}[1]
\Require A data set $\Dcal_J := \{(x^i, \phi^i, \lambda^i)\}_{i=1}^{N_{\textup{samples}, J}} \subseteq (\Bar{\Xcal}\times [0,1]\times{[0, \lambda_\text{max}]})^{N_{\textup{samples}, J}}$, iterations limit $N_\text{iter}\in\Z_{>0}$, and  neural networks $\{\widehat{J}_{\theta, j}\}_{j=0}^{N_\text{iter}}$ such that $\widehat{J}_{\theta, j}: \Xcal\times[0,1] \times [0, \lambda_\text{max}] \to \R, \forall j\in\{0, ..., N_\text{iter}\}$.
\State Initialize: $\widehat{J}_{\theta, 0}(x,\phi,\lambda) \leftarrow 0, \quad \forall (x,\phi, \lambda)\in\Xcal\times[0,1]\times[0, \lambda_\text{max}]$.
\For{$j=1,...,N_\text{iter}$}
    \State\label{state:targetJ} Compute target $\forall (x,\phi, \lambda)\in\Dcal_J$: 
    \begin{align*}
        & g_j(x,\phi,\lambda, u)
        \leftarrow \phi\ell(x,u) 
             \\& \begin{aligned}[t] 
             + \E_d\big[&\widehat{J}_{\theta, j-1}(f(x,u,d),\gamma\phi,\lambda) \mid 
              x,u, \phi, 
              \\&  f(x,u,d)\in\Xcal\big] 
             \cdot \P(f(x,u,d)\in\Xcal \mid x, u) &
             \end{aligned}
             \\& \begin{aligned}[t]
                 + \Big(\E_d\big[&\widehat{W}_{\theta, j-1}(f(x,u,d),\gamma\phi) \mid  x,u,  \phi, 
                 \\& f(x,u,d)\not\in\Xcal\big] +\lambda\Big)
                  \cdot \P(f(x,u,d)\not\in\Xcal \mid x, u) 
             \end{aligned}
        \\& T_j(x,\phi, \lambda) \leftarrow \min_{u\in\Ucal} g_j(x,\phi,\lambda,u)
    \end{align*}
    \State Train NN:
        \begin{align*}
            \theta^\star \leftarrow \arg\min_{\theta} \Big\{ &\frac{1}{N_{\textup{samples}, J}} \sum_{i=1}^{N_{\textup{samples}, J}} \Big\| \widehat{J}_{\theta}(x^i,\phi^i, \lambda_i) \\& - T_j(x^i, \phi^i, \lambda_i)  \Big\|_2^2 \Big\} ,
        \end{align*}
    \State Set: $\widehat{J}_{\theta, j} \leftarrow \widehat{J}_{\theta^\star, j}$
\EndFor
\\
\Return $\widehat{J}_{\theta, N_\text{iter}}$.
\end{algorithmic}
\end{algorithm}

Once an NN approximating $J$ is obtained by means of Algorithm \ref{alg:learn_J}, 
we can construct an approximated version of \eqref{eq:J_explicit}, for $s\in\Scal, \lambda\in[0,\lambda_\text{max}]$:
\begin{align*}
    \widetilde{J}(s,\lambda) 
    := \begin{cases}
        \begin{aligned}[t]
         \widehat{J}(x,\phi,\lambda) \quad 
             \text{if } s\in\Scal,\; s=[x,1,0,\phi],
            \end{aligned}
            \\[1ex]
             \widehat{W}(x,\phi) + \lambda\psi
             \hfill \text{otherwise},
        \end{cases} \nonumber
    \end{align*}
where $\widehat{J}$ satisfies
\begin{align}\label{eq:approx_bellman_J}
    &\widehat{J}(x,\phi,\lambda) = \min_{u\in\Ucal} \Bigl\{\phi\ell(x,u) \nonumber
    \\& + \E_d\!\left[\widehat{J}(f(x,u,d), \gamma\phi,\lambda)\mid x,\phi, u,  f(x,u,d)\in\Xcal\right] \nonumber
    \\& \quad \cdot \P(f(x,u,d)\in\Xcal \mid x,u) \nonumber 
    \\& + \left(\E_d\!\left[\widehat{W}(f(x,u,d), \gamma\phi)\mid x,\phi, u, f(x,u,d)\not\in\Xcal\right] + \lambda \right) \nonumber
    \\& \quad \cdot \P(f(x,u,d)\not\in\Xcal\mid x,u) \Bigr\},
\end{align}
and $\widehat{W}$ satisfies
\begin{align}\label{eq:approx_bellman_W}
    &\widehat{W}(x,\phi) = \min_{u\in\Ucal} \Bigl\{\phi\ell(x,u) + \E_d[\widehat{W}(f(x,u,d),\gamma\phi)\mid x,\phi,u]\Bigr\}.
\end{align}
As in Step \ref{state:targetJ} of Algorithm 2, in \eqref{eq:approx_bellman_J} we have used the law of conditional expectation to determine whether the next state belongs to $\Xcal$.
Then, we can extract the optimal dual variable $\lambda^\star(s_0)\in[0, \lambda_\text{max}]$, associated to a certain initial state $s_0\in\Scal$, with $(\xi_0, \psi_0) = (1, 0)$ by solving 
\begin{align}\label{eq:lambda_opt}
    \lambda^\star(s_0) = \arg\max_{\lambda\in[0, \lambda_\text{max}]} (\widehat{J}(x_0, 1, \lambda) - \lambda\veps).
\end{align}
Note that $J(s_0, \lambda) - \lambda\veps$ is the dual function of \eqref{cc_ocp_dual}; therefore, it is concave. Hence, $\widehat{J}(s_0, \lambda) - \lambda\veps$ should also be approximately concave provided that the NN $\widehat{J}$ is a good approximation of $J$. Hence, \eqref{eq:lambda_opt} can be solved efficiently, e.g.,  via automatic differentiation. 
Specifically, by setting the learning rate in a way that \eqref{eq:alpha_ass} is satisfied, the resulting dual ascent scheme to solve \eqref{eq:lambda_opt} reproduces the steps \eqref{eq:primal_update}--\eqref{eq:dual_update}, with the difference that the NN $\widehat{J}$ is used in place of $J$.

Then, after the optimal multiplier $\lambda^\star(s_0)$ is found for the chosen closed-loop initial condition $s_0$, the optimal input at time step $t\in\Z_{\geq0}$ can be found as a minimizer of the one-step-ahead problem
\eqref{eq:approx_bellman_J} if $\xi_t=1, \psi_t=0$, and of \eqref{eq:approx_bellman_W} if $\xi_t=0$.

Solving this problem can be significantly cheaper than using MPC-based approaches, which typically consider a long prediction horizon in the online control phase. In our case, a one-step problem is sufficient, since $\widetilde{J}$ already approximates the value of the infinite-horizon problem. 

\section{Numerical experiments} \label{sec:experiment}
In this section, we validate\footnote{Code available at https://github.com/fracordi/chance-constrained-adp} our approach on a numerical example and compare it with an MPC scheme that approximates the joint chance constraint using a randomized approach \cite{campi2008exact, prandini2012randomized}.

\subsection{Setup}
We consider a two-state unicycle \cite{schmid2025computing}, with dynamics described by 
\begin{align}\label{eq:dubin}
    \begin{cases}
        p_{x, t+1} = p_{x, t} + 0.8 \cos(u_{t}) + d_{x, t}
        \\p_{y, t+1} = p_{y, t} + 0.8 \sin(u_{t}) + d_{y, t},
    \end{cases}
\end{align}
where the state vector $x$ is represented by the position of the unicycle, with $p_x\in [-10,  10]$ and $p_y \in[-5, 10]$. The control input $u\in[-\pi, \pi]$ is the angle, and the unicycle moves at a constant driving speed of 0.8. The disturbances follow a truncated Gaussian distribution, with 0 mean and covariance matrix diag$(0.2^2, 0.2^2)$, truncated to the interval $[-0.3, 0.3]\times[-0.3, 0.3]$. The cost matrices are chosen as $Q=\text{diag}(1, 1)$ and $R=0.1$, and the reference for the state is $x_\text{ref}=[0, 0]^\top$ and for the input it is $u_\text{ref}=0$. Then, the stage cost is $\ell(x,u) = \|x-x_\text{ref}\|_Q^2 + \|u-u_\text{ref}\|_R^2$, with a discount factor of $\gamma=0.95$. The stage cost is clipped at a large-enough constant for points out of the state space, in a way that Assumption \ref{ass:bounded_cost} is satisfied.

The unicycle has to satisfy an obstacle-avoidance specification, where the obstacle is a rectangle in the state space described by $(p_x, p_y)\in[-2, 10]\times[3, 6]$ (see Figure \ref{fig:traj}). This has to be achieved in a probabilistic sense, in which a trajectory is considered unsafe if, at any time step, it hits the obstacle. This should occur with probability at most $\veps=0.1$.

To implement our approach, we follow Algorithms \ref{alg:learn_W} and \ref{alg:learn_J}, using 200 state samples obtained by sampling from the state space, excluding the obstacle. Among these samples, 100 are uniformly chosen at random from the safe set to ensure good coverage of both dimensions. Then, 100 samples are taken closer to the obstacle to improve the accuracy of learning the safety specification. Specifically, we draw 100 samples uniformly at random from the area around the obstacle, with points having a distance, measured in the infinity norm, from the obstacle less than or equal to 1. Then we design an NN architecture with 3 hidden layers, each containing 64 hidden units, to learn $\widehat{W}$ and $\widehat{J}$ using Algorithms \ref{alg:learn_W} and \ref{alg:learn_J}. For the latter, we select $\lambda\in\{0, 50, 100, ..., 350, 400\}$, where $\lambda_\text{max}=400$ has been selected via trial and error, since, for all initial conditions that we have tested, the optimal dual variable has always achieved a lower value. We use AdamW optimizer \cite{loshchilov2017fixing}, and the Gaussian Error Linear Unit (GELU) activation function, which is a smooth version of the classical Rectified Linear Unit (ReLU). Then, the one-step-ahead problems in Algorithms \ref{alg:learn_W} and \ref{alg:learn_J} are solved by approximating the expectations using 400 uncertainty samples, which is enough to provide a good empirical approximation of the true expectation. Lastly, the minimum with respect to $u$ is found over 100 input samples, which provides a good accuracy to cover the one-dimensional interval $[-\pi, \pi]$.

Regarding the MPC scheme used for comparison, we need to introduce suitable approximations to the chance constraint. This is done in two steps: First, in the open loop prediction, the chance constraint is approximated by its finite-horizon version, with horizon $H_\text{pred}\in\Z_{>0}$; second, we replace the resulting probabilistic constraint by its randomized approximation, i.e., by enforcing $N_\text{rnd}\in\Z_{>0}$ hard constraints. This is in accordance with the theoretical results in \cite{campi2008exact, prandini2012randomized}, in which the risk of a constraint violation for a disturbance realization possibly out of the sample set decreases with $N_\text{rnd}$. Therefore, when solving the MPC problem at time step $t$, the chance constraint is approximated by $x_{t+k}^{(i)} \in \Xcal, \forall k\in\{1,...,H_\text{pred}\}, \forall i\in\{1,...,N_\text{rnd}\}$. Then, to enforce the obstacle avoidance specification, we equivalently define the safe set $\Xcal$ to be the union of three polytopes, specifically: $\Xcal = \Pcal_1 \cup \Pcal_2 \cup \Pcal_3$, with $\Pcal_1=[-10, 10]\times[-5, 3], \Pcal_2=[-10, -2]\times[3, 6]$, and $\Pcal_3=[-10, 10]\times[6, 10]$. In Figure \ref{fig:traj}, this corresponds to the set indicated by the dashed line excluding the obstacle.

The resulting MPC problem is a nonlinear optimization problem with a union of polytopic constraints, which can be cast as a mixed-integer nonlinear program, by associating a binary variable $y_{kip}=1$ if and only if $x_k^i$, i.e., the state at predicted time step $k$ according to uncertainty realization $i$, is in polytope $\Pcal_p$, $\forall k\in\{1,...,H_\text{pred}\}, \forall i\in\{1,...,N_\text{rnd}\}, p\in\{1,2,3\}$, and 0 otherwise, with the condition that $\sum_p y_{kip} = 1, \forall k\in\{1,...,H_\text{pred}\}, \forall i\in\{1,...,N_\text{rnd}\}$. In addition, we add slack variables to prevent infeasibility when the system state lies in the region defined by the obstacle, due to potential constraint violations. The slack variables are then penalized in the objective function. The nonlinear MPC scheme is implemented using CasADi \cite{andersson2019casadi}, version 3.7.2, with solver Bonmin, version 1.8.9.

In the following, we test our approach and the randomized MPC scheme by means of a Monte Carlo closed-loop simulation, where we evaluate, in particular, the closed-loop cost and constraint violation, defined as
\begin{align*}
    L_\text{cl} & := \frac{1}{N_\text{mc}}\sum_{i=1}^{N_\text{mc}} \sum_{t=0}^{T_\text{cl}} \gamma^t\ell(x^i_t, u^i_t)
    \\ R_\text{cl} & := \frac{1}{N_\text{mc}}\sum_{i=1}^{N_\text{mc}} \mathbf{1}_{\left(\exists t \in \{0,...,T_\text{cl}\}:\, x_t^i \not\in\Xcal \right)}
\end{align*}
where, in this case, $x^i_t$ represents the state of the closed-loop system according to the sample trajectory $i$, for $t\in\{0,..., T_\text{cl}\}, i\in\{0,..., N_\text{mc}\}$.

\subsection{Results and comparisons}

First, we analyze the results of the function approximator $\widehat{J}$ resulting from Algorithm \ref{alg:learn_J}. Figure \ref{fig:Jhat} depicts $\widehat{J}(x,\phi)-\lambda\veps$, i.e., the dual function in \eqref{cc_ocp_dual}, for $\phi=1$ and $x_0\in\{[4, 7]^\top, [5, 8]^\top, [6, 9]^\top\}$, i.e., three different initial conditions above the obstacle, and gradually farther from the origin. We observe that the resulting function is (approximately) concave, with points farther from the setpoint being more costly. The concave shape is expected, since $\widehat{J}-\lambda\veps$ approximates the dual function, defined in Section \ref{sec:duality}, which is the pointwise infimum of affine functions and therefore it is concave.

\begin{figure}
    \centering
    \includegraphics[width=0.8\linewidth]{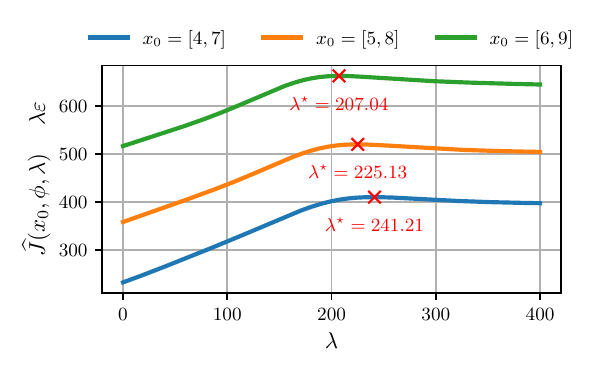}
    \caption{$\widehat{J}-\lambda\veps$ for different initial states and $\phi=1$. The red cross represents the optimal $\lambda$ for a given state.}
    \label{fig:Jhat}
\end{figure}

We now analyze the closed-loop performance resulting from our scheme, in terms of cost and constraint satisfaction. 
Figure \ref{fig:traj} shows the closed-loop trajectories of the Monte Carlo simulation, starting from $x_0=[5, 8]$, with $T_\text{cl}=30$ and $N_\text{mc}=200$. Our learning-based approach (left figure) yields consistent results, both in terms of
probabilistic feasibility and of closed-loop performance. The obstacle is hit with a small probability, computed empirically, of $R_\text{cl} = 0.045$, which is smaller than $\veps=0.1$, and $L_\text{cl} = 528.3$. Whenever the obstacle is hit, the trajectory continues along the path with lower cost, consistent with our interpretation in Section \ref{sec:value_structure}, which asserts that once the constraint is violated for a sample trajectory, the optimal closed-loop behavior is to follow the path that minimizes the cost.

\begin{figure}
    \centering
    \includegraphics[width=0.99\linewidth]{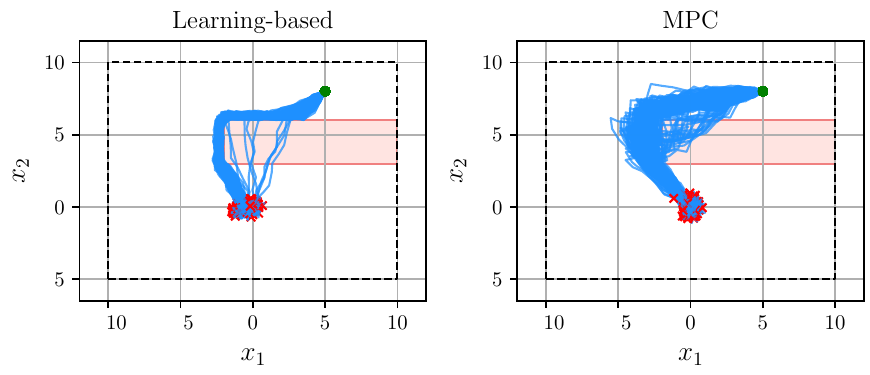}
    \caption{Closed-loop trajectories of our learning-based approach (left) versus the randomized MPC scheme (right, with $H_\text{pred}=6, N_\text{rnd}=25$), starting from $x_0=[5,8]^\top$, with $N_\text{mc}=200$.}
    \label{fig:traj}
\end{figure}

A similar behavior is observed for other initial conditions around the obstacle, for which the empirical closed-loop constraint violation is shown in Figure \ref{fig:x0_validation}.
Here, we have obtained a grid of the state space with 15 points along each axis by maintaining a margin of 0.5 from the boundaries. We have then discarded the points on the obstacle, resulting in 185 points. We can see that, for most of the initial conditions we have tested, our method gives an empirical probability of constraint violation close to $\veps$.  Specifically, 168 are feasible (i.e., with a violation in the range $[0, 0.1]$), 11 yield a violation in the range $(0.1, 0.2]$ and 6 in the range $(0.2, 0.71]$. Note that deviations may arise from approximation errors in the learning scheme or from a limited dataset, which can be addressed by sampling more points in critical regions around the obstacle.

\begin{figure}
    \includegraphics[width=0.95\linewidth]{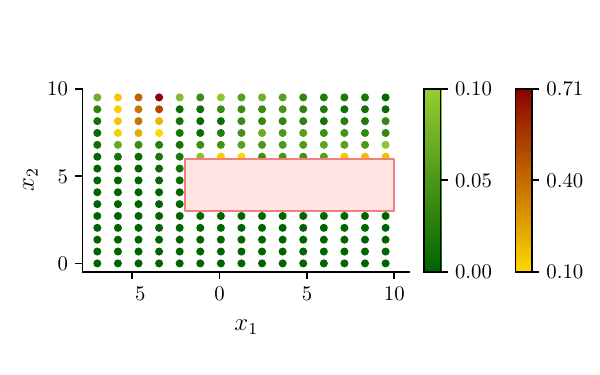}
    \vspace{-0.5cm}
    \caption{Color map representing the empirical constraint violation for initial states around the obstacle when using our approach, computed with $N_\text{mc}=500$ trajectories for each initial condition.}
    \label{fig:x0_validation}
\end{figure}

We now consider again $x_0=[5, 8]$, and run a closed-loop comparison with the randomized MPC approach, with $N_\text{rnd}=25$. For computational tractability, we set a maximum solver time of 10 seconds.
To address the obstacle-avoidance specification, a very long prediction horizon would be needed to find the path with the minimum cost while avoiding the obstacle. To alleviate the computational burden, we split the control task into two phases: we first require the system to reach the intermediate reference $[-3,6.5]^\top$ on the left side of the obstacle; then, once the state of the system is sufficiently close to this intermediate reference, we set $[0,0]^\top$ as the final target. This is based on the intuition that the optimal policy resulting from the MPC scheme, which enforces $N_\text{rnd}$ hard constraints, will certainly avoid the obstacle on the left, since this path is the only feasible one. By splitting the control task into two phases, we see that a prediction horizon $H_\text{pred}=6$ is sufficient to solve the obstacle-avoidance problem.
The resulting Monte Carlo simulation yields $L_\text{cl}=672.0$ and $R_\text{cl}=0.17$ for $N_\text{rnd}=25$, and the trajectories are shown in Figure \ref{fig:traj} (right figure). We observe that $N_\text{rnd}=25$ is insufficient to meet the safety specification with the desired probability; therefore, more samples are required in the randomized approach, thereby increasing computational effort. In addition, we observe more variability in the closed-loop trajectories of the randomized approach. This can be due to the solver time being limited to 10 seconds, which is most often reached when the system is near the obstacle. In such cases, a possibly suboptimal solution is obtained, which might degrade the closed-loop performance.
Moreover, the performance of the randomized approach is more conservative than ours, since it yields $L_\text{cl}=672.0$, which is greater than $L_\text{cl}=528.3$ in our approach. This is because the policy resulting from our approach follows the shortest path when a violation occurs, confirming the interpretation given in Section \ref{sec:value_structure}, whereas the policy resulting from the randomized MPC controller does not. 
For both our approach and the MPC one,  we observe that the system position converges to a neighborhood of $[0,0]^\top$; therefore, the reference $x_\text{ref}$ is correctly tracked. However, in our approach, there is a small tracking error. This is mostly due to the discount factor, since the cost of future states is weighted less in the value function. Hence, the tracking error can be reduced by increasing $\gamma$.

Last, we compare the two approaches in terms of solver time. As mentioned before, the maximum solver time for the randomized MPC approach is set to 10 seconds. For example, by choosing $x_0=[5, 8]^\top$, this limit is achieved for some time steps when the system is in the proximity of the obstacle, whereas the average solver time is 2.94 seconds. When using our approach, the time required to solve the one-step-ahead problem via sampling over 100 input samples is 0.01 seconds, which is significantly lower than that of the MPC approach. 

\section{Conclusions} \label{sec:conclusions}
In this paper, we have proposed a computationally efficient way to solve optimal control problems with infinite-horizon chance constraints. By means of the Lagrange dual framework and of an appropriate state augmentation, we have formulated an unconstrained Markov  control problem over the augmented state space that is equivalent to the original one. This problem enjoys several theoretical properties, which allow to solve it by means of classical unconstrained reinforcement learning algorithms. A dedicated learning scheme to approximate the dual function allows us to consider continuous state-input spaces, and the resulting approach outperforms state-of-the-art methods in terms of performance and computational complexity. 

The most relevant topics for future work consist in addressing the weaknesses of our learning scheme, e.g., reducing or bounding the approximation error, investigating convergence properties of the approximate value iteration scheme, and exploring methods to improve the scalability of our learning-based approach with respect to the system dimension.

\section*{References}
\bibliographystyle{IEEEtran}

\bibliography{bibliography.bib}

\begin{IEEEbiography}[{\includegraphics[width=1in,height=1.25in,clip,keepaspectratio]{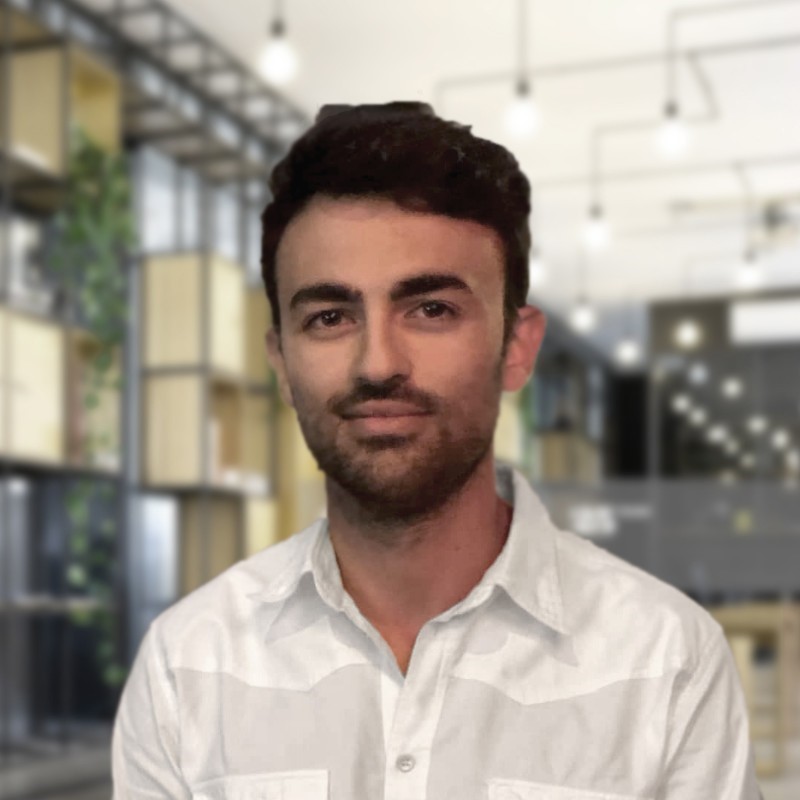}}]{Francesco Cordiano} received the B.Sc. degree in automation engineering
from Politecnico di Milano, Italy, and the M.Sc.
degree in robotics, systems, and control from ETH Zurich, Switzerland, in 2019 and 2022, respectively. 
He is currently a PhD candidate at the Delft Center
for Systems and Control, Delft University of
Technology, The Netherlands.

His current research interests include
stochastic optimization, reinforcement learning,
and model predictive control of hybrid systems.
\end{IEEEbiography}

\begin{IEEEbiography}[{\includegraphics[width=1in,height=1.25in,clip,keepaspectratio]{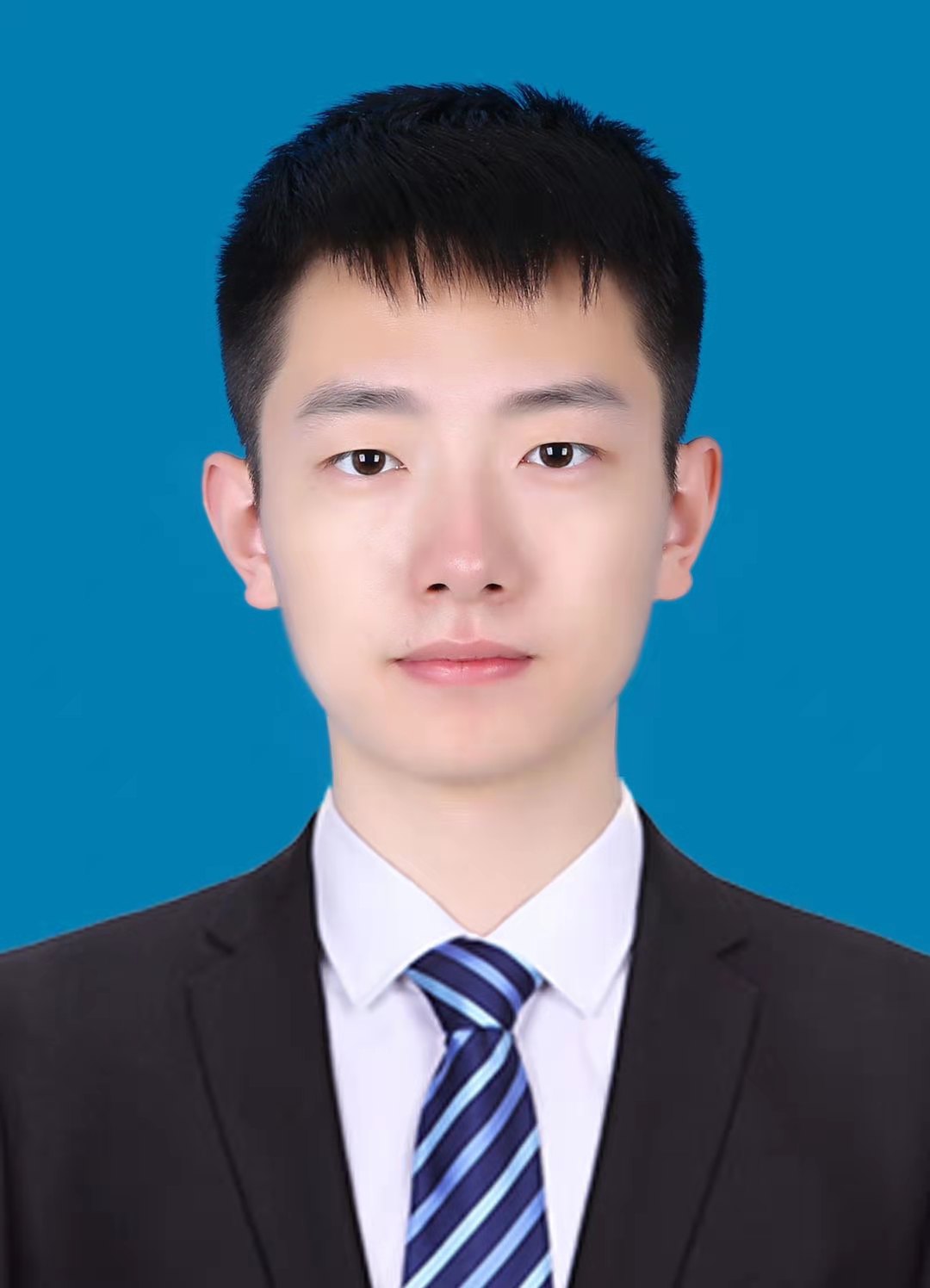}}]{Kanghui He} is a postdoctoral researcher in the Department of Engineering Science, University of Oxford, U.K. He received his PhD from the Delft Center for Systems and Control at Delft University of Technology, the Netherlands, in 2026. He received the M.Sc. degree from the Department of Flight Dynamics and Control at Beihang University in 2021 and the B.Sc. degree from the School of Mechanical Engineering and Automation at Beihang University in 2018. He was a research assistant in the Department of Automation, Tsinghua University. His research interests include learning-based control, model predictive control, optimization, and their applications in mobile robots.
\end{IEEEbiography}

\begin{IEEEbiography}[{\includegraphics[width=1in,height=1.25in,clip,keepaspectratio]{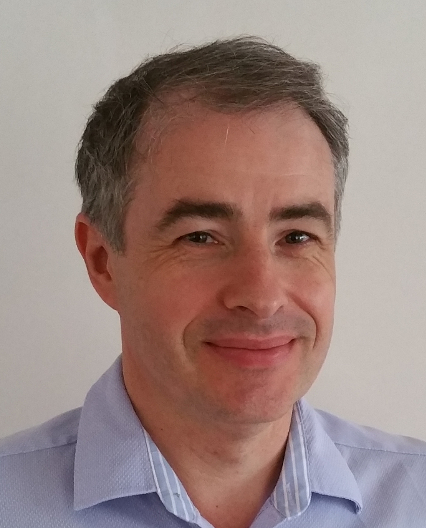}}]{Bart De Schutter}
(IEEE member since 2008,
senior member since 2010, fellow since 2019) is
a full professor and head of department at the
Delft Center for Systems and Control of Delft
University of Technology in Delft, The Netherlands.

Bart De Schutter is senior editor of the IEEE
Transactions on Intelligent Transportation Systems. His current research interests include integrated learning- and optimization-based control and decision making,
multi-level and multi-agent control, and control of
hybrid systems.
\end{IEEEbiography}

\end{document}